\newcommand{\Q}{{\mathbb{Q}}}
\newcommand{\N}{{\mathbb{N}}}
\newcommand{\F}{\mathbb{F}}
\newcommand{\Aut}{{{\operatorname{Aut}}}}
\newcommand{\Irr}{{{\operatorname{Irr}}}}
\newcommand{\Cl}{\operatorname{Cl}}
\newcommand{\Exp}{\operatorname{Exp}}
\newcommand{\Gal}{\operatorname{Gal}}
\newtheorem{thm}{Theorem}[section]
\newtheorem{lem}[thm]{Lemma}
\newtheorem{con}[thm]{Conjecture}
\newtheorem{pro}[thm]{Proposition}
\newtheorem{cor}[thm]{Corollary}
\newtheorem*{conA'}{Conjecture A'}
\newtheorem{thml}{Theorem}
\theoremstyle{definition}
\numberwithin{equation}{section}
\author{Juan Mart\'inez Madrid}
\address{Departament de Matem\`atiques, Universitat de Val\`encia, 46100
  Burjassot, Val\`encia, Spain}
\email{Juan.Martinez-Madrid@uv.es}
\author{Marco Vergani}
\address{Dipartimento di Matematica e Informatica U. Dini,\newline
Universit\`a degli Studi di Firenze, viale Morgagni 67/a,
50134 Firenze, Italy.}
\email{marco.vergani@unifi.it}
\thanks{The first author was  supported by Ministerio de Ciencia e Innovaci\'on (Grant PID2022-137612NB-I00 funded by MCIN/AEI/10.13039/501100011033 and ``ERDF A way of making Europe"),   and by Generalitat Valenciana  CIACIF/2021/228. 
The second author is partially supported by INdAM-GNSAGA. This research is also funded by the European Union-Next Generation EU, Missione 4 Componente 1, CUP B53D23009410006, PRIN 2022 2022PSTWLB - Group Theory and Applications.}
\begin{document}


\title[Multiplicities of fields of values of conjugacy classes]{Multiplicities of fields of values of conjugacy classes in finite groups}

\keywords{Fields of values, Conjugacy classes}

\subjclass[2020]{Primary 20C15; Secondary 20E45}


\begin{abstract}
Given a group $G$ we write $h(G)$ to denote the maximum  number of times that a field extension of $\Q$ appears as the field of values of a conjugacy class of a group. In this work, we prove that $|G|$ is bounded in terms of $h(G)$. Moreover, we classify  the groups with $h(G)\leq 3$. 
\end{abstract}

\maketitle

\pagestyle{myheadings}



\section{Introduction}\label{Section1}

All throughout this paper $G$ is a finite group. We will write $\Cl(G)$ and $\Irr(G)$ to denote the set of conjugacy classes and the set of complex irreducible characters, respectively. If $\chi$ is a character of $G$ and $K\in \Cl(G)$, then we define $\chi(K)=\chi(g)$ for any $g \in K$.

Given a character $\chi$ of $G$ (not necessary irreducible), we can associate a finite extension of $\Q$ to $\chi$, namely the field generated by the values $\chi(K)$ for $K$ in $\Cl(G)$. We denote this extension by

\[\Q(\chi)=\Q(\chi(K)\mid K \in \Cl(G)).\]  

The extension $\Q(\chi)$ is called \textbf{the field of values} of $\chi$.

Analogously, given $K \in \Cl(G)$, we define the \textbf{field of values} of $K$ as the field extension of $\Q$ generated by the values $\chi(K)$, where $\chi$ runs through $\Irr(G)$. That is
\[\Q(K)=\Q(\chi(K)\mid\chi \in \Irr(G)).\] 
Let $K \in \Cl(G)$ and $g \in K$. Let $|g|$ denote the order of $g$. We define $\Q(g)=\Q(K)$. We know that $\chi(g)$ can be expressed as the sum of $|g|$-th roots of the unity for any $\chi \in \Irr(G)$. It follows that $\Q(K)=\Q(g)\subseteq  \Q_{|g|}$, where $\Q_{|g|}=\Q(e^{\frac{2i\pi}{|g|}})$.

Finally, we define the field of values of $G$ as the field generated by the values $\chi(K)$ where $\chi$ runs through $\Irr(G)$ and $K $ runs through $\Cl(G)$. That is
\[\Q(G)=\Q(\chi(K)\mid\chi \in \Irr(G), K \in \Cl(G)).\]
By the previous observation, we have that  $\Q(G)\subseteq \Q_{\Exp(G)}$, where $\Exp(G)$ denotes the exponent of $G$.

Nowadays, the study of fields of values is a very active area of research in character theory.  Given a group $G$, we write $\pi(G)$ to denote the set of prime divisors of $|G|$. In the case of solvable groups, it is possible to bound the prime divisors of $|G|$ in terms of fields of values of classes and irreducible characters. A classical result of Gow \cite{G} shows that $\pi(G)\subseteq  \{2,3,5\}$ for any solvable group $G$ with $\Q(G)=\Q$. Chillag and Dolfi \cite{CD} proved that if $G$ is a solvable group such that $\Q(K)$ is either $\Q$ or a quadratic extension of $\Q$ for every $K \in \Cl(G)$, then $\pi(G)\subseteq \{2,3,5,7,13,17\}$. It is not known whether $17$ can actually occur as a prime divisor of a group that satisfies this property. Moreover, Tent \cite{Tent} proved a dual version for characters. More precisely, Tent showed that if $G$ is a solvable group such that $\Q(\chi)$ is either $\Q$ or a quadratic extension of $\Q$ for every $\chi \in \Irr(G)$, then $\pi(G)\subseteq \{2,3,5,7,13\}$.

It is worth mentioning that there exists a common ingredient in the proofs of many of the  results in \cite{CD,G,Tent}. In many cases, they use the techniques developed by Farias e Soares \cite{farias}, especially the concept of $h$-eigenvalue property.

Given a group $G$, we write $k(G)$ to denote the number of conjugacy classes of $G$. A classical result of Landau \cite{Landau} shows that $|G|$ is bounded in terms of $k(G)$. That is, there exists a non-decreasing function $a:\N \to \N$ such that $|G|\leq a(k(G))$ for any  group $G$.

There are many generalizations of Landau's Theorem (see, for example, \cite{HK,mg}). In this work, we are interested in bounding $|G|$ in terms of invariants defined on the fields of values. Given a group $G$,  Moretó \cite{Alex} defined the invariant $f(G)$ as
\[f(G)=\max_{F/\mathbb{Q}}|\{\chi \in \Irr(G)\mid\mathbb{Q}(\chi)=F\}|.\]

Theorem A of \cite{Alex} shows that $|G|$ is bounded in terms of $f(G)$. The first goal this work is to prove a dual version of that result for fields of values of conjugacy classes. For a group $G$, we define the invariant $h(G)$ by
\[h(G)=\max_{F/\mathbb{Q}}|\{K\in \Cl(G)\mid\mathbb{Q}(K)=F\}|.\]
With this definition, we prove an analogous version of Theorem A of \cite{Alex} for conjugacy classes.

\begin{thml}\label{thmA}
     Let $G$ be a finite group. Then $|G|$ is $\hat{h}(G)$-bounded.
\end{thml}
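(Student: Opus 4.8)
The plan is to establish Theorem A by following the strategy of Moretó's dual result (Theorem A of \cite{Alex}), adapting each ingredient from characters to conjugacy classes. First I would reduce the problem to bounding $|G|$ in terms of two separate quantities: the number of distinct fields of values occurring among conjugacy classes of $G$, call it $m(G) = |\{\Q(K) \mid K \in \Cl(G)\}|$, and $h(G)$ itself. Since every class $K$ satisfies $\Q(K) = \Q(g) \subseteq \Q_{|g|}$, the key point is that $k(G) = |\Cl(G)| \leq h(G) \cdot m(G)$, so by Landau's theorem it suffices to bound $m(G)$ in terms of $h(G)$. Thus the heart of the matter is: \emph{the number of distinct fields of values of conjugacy classes is bounded in terms of $h(G)$.}

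\textbf{Bounding the number of fields.} To control $m(G)$, I would first bound the exponent (or at least the set of prime divisors together with the relevant Sylow exponents) of $G$ in terms of $h(G)$. The natural tool is a counting argument on rational classes and on the Galois action: the absolute Galois group, or rather $\Gal(\Q_n/\Q) \cong (\ZZ/n\ZZ)^\times$ for $n = \Exp(G)$, permutes $\Cl(G)$, with the orbit of $K$ having size $[\Q(K):\Q]$, which is at most $\varphi(|g|)$. Classes with the \emph{same} field of values $F$ form a union of Galois orbits each of size $[F:\Q]$; since there are at most $h(G)$ such classes, the subgroup of $(\ZZ/|g|\ZZ)^\times$ fixing a given class (equivalently $\Gal(\Q_{|g|}/\Q(g))$) has index at most $h(G)$ in its image. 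I would use this to bound, for each element order $n$ dividing $\Exp(G)$, the number of cyclotomic fields that can appear, and then combine with a bound on the number of element orders. A cleaner route, mirroring \cite{Alex}, may be to first bound each prime power $p^a \mathbin{\|} \Exp(G)$ in terms of $h(G)$ — e.g.\ by exhibiting, when $p^a$ is large, many classes (powers of a single element of order $p^a$, or classes of distinct element orders that are forced to have pairwise distinct fields) sharing one field of values, contradicting the definition of $h(G)$.

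\textbf{From exponent bound to order bound.} Once $\Exp(G)$ is bounded in terms of $h(G)$, every field $\Q(K)$ is a subfield of the single field $\Q_{\Exp(G)}$, which has boundedly many subfields, so $m(G)$ is bounded; combined with $k(G) \leq h(G)\cdot m(G)$ and Landau, this yields Theorem A. The main obstacle I anticipate is the exponent bound itself: unlike the character side, where Moretó can use character degree combinatorics and the structure of $\Irr(G)$, here I must work purely with conjugacy classes, and the delicate case is when $G$ has an element $g$ of large prime power order $p^a$ but the powers $g^{p^i}$ do not generate enough distinct fields — this forces structural constraints (for instance, the cyclic group $\langle g \rangle$ contributing many classes of $G$ with coincident fields of values only if $G$ is close to abelian on that part). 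Handling $p$-elements for small primes (notably $p=2$, where $\Q_{2^a}$ has many subfields but also many automorphisms) will require the most care, and I would likely isolate it using an analogue of the Chillag–Dolfi / Farias e Soares eigenvalue techniques referenced above, or a direct argument bounding the number of rational classes of $p$-elements. I also note the statement is phrased with $\hat h(G)$ rather than $h(G)$; presumably $\hat h(G)$ is a variant (perhaps the maximum over \emph{nontrivial} fields, or counting classes up to the quadratic-versus-rational distinction) introduced just before the proof, and the argument above applies verbatim once the relationship $k(G) \le (\text{bounded function of }\hat h(G))$ is in place.
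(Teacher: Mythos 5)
There is a genuine gap, and it sits exactly where you flag ``the main obstacle'': your entire reduction ($k(G)\le h(G)\cdot m(G)$, then Landau) hinges on bounding $\Exp(G)$ — in particular the set of \emph{primes} dividing $|G|$ — in terms of $h(G)$, and no argument for this is given. The counting you sketch (powers of a single element of order $p^a$, or nested fields $\Q(g)\supseteq\Q(g^p)\supseteq\cdots$ inside the cyclic extension $\Q_{p^a}/\Q$) does bound the exponent $a$ for each fixed prime, but it says nothing about $p$ itself: an element $g$ of prime order $p$ contributes only $[\Q(g):\Q]\le h(G)$ classes sharing the field $\Q(g)$, which never contradicts the definition of $h(G)$ no matter how large $p$ is. Bounding the primes is the genuinely hard content here — already for solvable semi-rational groups it is a theorem of Chillag and Dolfi resting on the Farias e Soares machinery, and it is not even known whether $17$ occurs there; no off-the-shelf version exists for $\frac{1}{k}$-rational, possibly nonsolvable, groups. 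Without it, $m(G)$ cannot be bounded (the fields $\Q(g)$ for elements of large prime order range over infinitely many subfields of distinct cyclotomic fields), so the Landau step never gets off the ground.

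The paper avoids this obstacle entirely by replacing Landau with the recent generalization of \c{C}inarci, Keller, Mar\'oti and Simion (Theorem \ref{CKMS}): $|G|$ is bounded in terms of $n(G)=\max_p k_p(G)$, the maximum number of classes of $p$-elements over all primes $p$. One then only needs the bound $k_p(G)\le 3h(G)^2$ \emph{uniformly in $p$}, with no control on which primes occur; this follows from precisely the subfield count you perform (at most $h(G)$, resp.\ $3h(G)$, subfields of $\Q_{p^n}$ of degree at most $h(G)$ for $p$ odd, resp.\ $p=2$, each realized by at most $h(G)$ classes). So your Galois-theoretic counting is the right local ingredient, but it is aimed at the wrong global target: to make your route work you would have to supply the prime bound, which is a substantial theorem in its own right and is exactly what the citation of \cite{CKMS} is there to circumvent. (The $\hat h(G)$ in the statement is simply a typo for $h(G)$; your reading is correct.)
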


It is important to remark that Theorem \ref{thmA} is a consequence of a generalization of Landau's Theorem, which was recently proved by    Çinarci,   Keller,  Maróti,   Simion \cite{CKMS}.

On the other hand, Moretó \cite{Alex} observed that $f(G)=1$ if and only if $G=1$. The same paper asks for the classification of all groups with $f(G)\in \{2,3\}$. This classification was obtained in \cite{J}. By inspection of the groups appearing in that classification, we observe that if $f(G)\in \{1,2,3\}$, then $h(G)=f(G)$. So we asked if $f(G)=h(G)$ for all groups with $h(G)\leq 3$. Our second main result answers that question affirmatively.
 
 \begin{thml}\label{thmB}
 Let $G$ be a finite group. If $h(G)\leq 3$, then $f(G)=h(G)$.
 \end{thml}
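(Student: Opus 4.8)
The plan is to reduce the statement $h(G) \le 3 \Rightarrow f(G) = h(G)$ to a small, manageable list of groups. The first step is to invoke Theorem A: since $|G|$ is $h(G)$-bounded, the hypothesis $h(G) \le 3$ already confines $G$ to a finite list of groups, so in principle the theorem is a finite check. However, the bound from \cite{CKMS} is far too large for a direct computer search, so the real work is to cut the list down by structural arguments before any case analysis. The key trichotomy is on the value of $h(G) \in \{1,2,3\}$. The case $h(G)=1$ should follow from an argument parallel to Moret\'o's observation that $f(G)=1$ iff $G=1$: if $h(G)=1$ then $G$ has at most one nontrivial rational class and one class with each given field of values, which forces $|G|$ very small (indeed one expects only $G = 1$ and $G = C_2$, both of which have $f(G)=h(G)$).

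For $h(G) \in \{2,3\}$ the plan is to exploit the known classification of groups with $f(G) \le 3$ from \cite{J} together with the \emph{duality} between characters and classes. I would first prove the easy inequality-type facts relating $h$ and $f$: for instance, the number of rational classes equals the number of rational irreducible characters (a classical result of Burnside--Brauer type), and more generally Galois-theoretic counting arguments (the action of $\operatorname{Gal}(\Q_{|G|}/\Q)$ on $\Irr(G)$ and on $\Cl(G)$ have the same cycle structure on rational points) give $|\{K : \Q(K) = \Q\}| = |\{\chi : \Q(\chi) = \Q\}|$. This handles the fields equal to $\Q$; for quadratic and higher fields one needs finer information. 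The strategy is then: assume $h(G) \le 3$, deduce that $G$ has few classes in each Galois orbit, push this through the structure theory (the derived subgroup, the solvable radical, Fitting subgroup) to show $G$ is solvable and small, and finally match against the list in \cite{J} to read off $f(G) = h(G)$.

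The main obstacle will be that $h$ controls \emph{classes} while the available classification \cite{J} controls \emph{characters}, and there is no formal implication $h(G) \le 3 \Rightarrow f(G) \le 3$ a priori — so one cannot simply quote \cite{J}. The crux is therefore to establish, by hand, enough structural constraints from $h(G) \le 3$ to either (a) directly bound $|G|$ small enough for explicit verification, or (b) prove $f(G) \le 3$ as an intermediate step. I expect (b) to be the cleaner route: show that if $h(G) \le 3$ then $G$ is solvable (using Gow-type results, since few rational classes forces $\pi(G)$ small and forces strong conditions on the nonabelian composition factors, which have many classes with large fields of values), then show the number of irreducible characters in each rational Galois orbit is also at most $3$ by a transfer/duality argument, and finally apply \cite{J}. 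The delicate points will be the nonsolvable case (ruling out all simple groups and almost simple groups, where one checks that the generic element orders produce too many distinct or too many coincident fields of values) and the bookkeeping for groups like $C_3$, $S_3$, $C_4$, $D_8$, $Q_8$, $C_2 \times C_2$, $A_4$, $C_5 \rtimes C_4$, etc., where one verifies $f = h$ directly from character/class tables.
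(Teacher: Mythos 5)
There is a genuine gap, and in fact a false step. You assert that ``Galois-theoretic counting arguments (the action of $\Gal(\Q_{|G|}/\Q)$ on $\Irr(G)$ and on $\Cl(G)$ have the same cycle structure on rational points)'' give $|\{K : \Q(K)=\Q\}| = |\{\chi : \Q(\chi)=\Q\}|$ as a classical Burnside--Brauer-type fact. This is false in general: Brauer's permutation lemma only equates the number of fixed points of each \emph{individual} Galois element on $\Irr(G)$ and on $\Cl(G)$, whereas rationality means being fixed by \emph{all} of them, and the two actions need not be permutation isomorphic. Concretely, \texttt{SmallGroup(32,42)} has $10$ rational irreducible characters but only $8$ rational classes, and \texttt{SmallGroup(32,15)} has $6$ versus $4$. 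The equality $|\Cl_{\Q}(G)|=|\Irr_{\Q}(G)|$ that the argument needs is a deep theorem of Navarro--Tiep and Rossi, and it is only known (and only invoked in the paper) under the hypothesis $|\Cl_{\Q}(G)|\le 3$, which is exactly what $h(G)\le 3$ supplies. Without citing that result, this step of your plan does not go through. (A smaller slip: $C_2$ has two rational classes, so $h(C_2)=2$, not $1$; the case $h(G)=1$ forces $G=1$ exactly.)

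Beyond that, the proposal stops at the point where the real work begins. The heart of the theorem is showing, for each \emph{non-rational} field $F$, that the number of classes with field of values $F$ equals the number of characters with field of values $F$; you defer this to an unspecified ``transfer/duality argument.'' The paper's route is quite different from the one you sketch: it never proves solvability, never bounds $|G|$, and never appeals to the classification in \cite{J}. Instead it shows that $h(G)\le 3$ forces every $\Q(g)$ to lie in $\Q_{p^3}$ for a single prime $p$ (via an analysis of the possible orders of rational elements), and then, for each target field $F$, constructs a specific Galois automorphism $\sigma$ whose fixed classes are exactly the rational ones together with those with field $F$ (and similarly for characters); a single application of Brauer's lemma to this $\sigma$, combined with $|\Cl_{\Q}(G)|=|\Irr_{\Q}(G)|$, then yields the desired multiplicity equality. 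Your alternative route (b) --- prove solvability, bound the group, and match against \cite{J} --- would additionally require eliminating all nonsolvable groups with $h(G)\le 3$, a substantial task you do not carry out; so as written the proposal is a plausible research plan rather than a proof, and its one concretely stated lemma is incorrect as stated.
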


It is easy to see that if $G$ is a group with $h(G)\leq 3$, then $G$  has at most three rational (conjugacy) classes. Then applying the main results developed by Navarro, Rossi and Tiep \cite{NT,Rossi}, we deduce that the number of rational irreducible character in $G$ is equal to the number of rational classes in $G$. Theorem \ref{thmB} will be proved by using this fact, together with a careful application of Brauer's Permutation Lemma.

We prove Theorem \ref{thmA} in Section  \ref{Section2} and Theorem \ref{thmB} in Section \ref{SecB}.

\section{Proof of Theorem \ref{thmA}}\label{Section2}

We begin by introducing the results on the fields of values of characters and conjugacy classes. This is deeply related with  the action of the Galois group on conjugacy classes and on irreducible characters.

Given $\chi \in \Irr(G)$ and $\sigma \in \Gal(\Q_{|G|}/\Q)$ we define $\chi^{\sigma}$ as $\chi^{\sigma}(g)=\chi(g)$ for any $g \in G$ (we recall that $\Q(\chi)\subseteq \Q_{|G|}$). This defines an action of $\Gal(\Q_{|G|}/\Q)$ acts  on $\Irr(G)$.

  In addition, $\Gal(\Q_{|G|}/\Q)$ acts  on $\Cl(G)$. Let $\epsilon$ be a $|G|$-th primitive root of unity. Then each $\sigma \in \Gal(\Q_{|G|}/\Q)$  is determined by the unique $r$ such that $0<r<|G|$, $\gcd(r,|G|)=1$ and $\sigma(\epsilon)=\epsilon^r$. Therefore, $\Gal(\Q_{|G|}/\Q)$ acts on $G$ and on $\Cl(G)$ as $ g^\sigma=g^r$ and $ (g^G)^\sigma=(g^r)^G$. Using this action we can prove the following.

\begin{lem}\label{Field Cont}
Let  $g \in G$  and let $j$ be a positive integer. Then $\Q(g^j)\subseteq \Q(g)$.
\begin{proof}
Let $n$ be the exponent of $G$ and let $\epsilon$ be a primitive $n$-th root of unity. We claim that  $\Gal(\Q_n/\Q(g))\leq \Gal(\Q_n/\Q(g^j))$. Let $\sigma\in \Gal(\Q_n/\Q(g))$. Assume that  $\sigma(\epsilon)=\epsilon^r$ for $0<r<|G|$ and  $\gcd(r,|G|)=1$. Since $\sigma\in \Gal(\Q_n/\Q(g))$, we have that $(g^G)^{\sigma}=g^G$. Thus, $g^r$ is conjugate to $g$. It follows that $(g^j)^r$ is conjugate to $g^j$ and hence $\sigma\in \Gal(\Q_n/\Q(g^j))$. The claim follows.

Therefore, $\Gal(\Q_n/\Q(g))\leq \Gal(\Q_n/\Q(g^j))$ and hence $\Q(g^j)\subseteq \Q(g)$.
\end{proof}
\end{lem}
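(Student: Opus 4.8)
The plan is to compare the stabilizers of $g$ and $g^j$ under the Galois action on conjugacy classes, and to apply the Galois correspondence. Recall that if $n = \Exp(G)$, then $\Q(g) \subseteq \Q_n$ and the Galois correspondence tells us that for a subfield $F$ with $\Q \subseteq F \subseteq \Q_n$, we have $F = \Q(g)$ precisely when $\Gal(\Q_n/F)$ is the full stabilizer of $F$; more to the point, the inclusion $\Q(g^j) \subseteq \Q(g)$ of subfields of $\Q_n$ is equivalent to the reverse inclusion $\Gal(\Q_n/\Q(g)) \subseteq \Gal(\Q_n/\Q(g^j))$ of subgroups. So it suffices to establish this containment of Galois groups.

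First I would take an arbitrary $\sigma \in \Gal(\Q_n/\Q(g))$ and write $\sigma$ in the standard parametrized form: fixing a primitive $n$-th root of unity $\epsilon$, there is a unique $r$ with $0 < r < |G|$, $\gcd(r, |G|) = 1$, and $\sigma(\epsilon) = \epsilon^r$, and the induced action on conjugacy classes is $(x^G)^\sigma = (x^r)^G$. Since $\sigma$ fixes $\Q(g)$ pointwise, $\sigma$ must fix the class $g^G$, i.e. $g^r$ is conjugate to $g$. The next step is the elementary observation that conjugation commutes with taking powers: if $g^r = h^{-1} g h$ for some $h \in G$, then $(g^j)^r = (g^r)^j = (h^{-1} g h)^j = h^{-1} g^j h$, so $g^r$ conjugate to $g$ forces $(g^j)^r$ conjugate to $g^j$. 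Hence $\sigma$ fixes the class $(g^j)^G$, which means $\sigma \in \Gal(\Q_n/\Q(g^j))$.

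This gives $\Gal(\Q_n/\Q(g)) \subseteq \Gal(\Q_n/\Q(g^j))$, and applying the Galois correspondence as noted above yields $\Q(g^j) \subseteq \Q(g)$, completing the proof. There is no real obstacle here; the only points that require a moment's care are (i) checking that the characterization $\Q(x^G) = \Q(\chi(x) \mid \chi \in \Irr(G))$ is indeed the fixed field of the class-stabilizer inside $\Gal(\Q_n/\Q)$ — this is the standard fact that the Galois action on characters and on classes are "dual" in the sense that $\sigma$ fixes $\chi(x)$ for all $\chi$ iff $\sigma$ fixes the class of $x$ — and (ii) being consistent about whether one parametrizes Galois elements by residues mod $n$ or mod $|G|$; since $n \mid |G|$ and every unit mod $n$ lifts to a unit mod $|G|$, this causes no difficulty, and one just works with the exponent $n$ throughout.
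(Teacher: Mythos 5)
Your proposal is correct and follows essentially the same route as the paper's own proof: parametrize $\sigma\in\Gal(\Q_n/\Q(g))$ by its exponent $r$, use that $g^r$ conjugate to $g$ implies $(g^j)^r$ conjugate to $g^j$ to get the containment $\Gal(\Q_n/\Q(g))\leq \Gal(\Q_n/\Q(g^j))$, and conclude by the Galois correspondence. The extra remarks you add (that conjugation commutes with taking powers, and that $\Q(g)$ is the fixed field of the class stabilizer) are exactly the implicit steps in the paper's argument.
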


Now, we relate $h(G)$ with the action of $\Gal(\Q_{|G|}/\Q)$ on $\Cl(G)$.

\begin{lem}\label{krational}
Let  $g \in G$. Then $|\Q(g):\Q|\leq h(G)$.
\begin{proof}
Let $\sigma \in \Gal(\Q(g)/\Q)\setminus\{1\}$. We can extend $\sigma$ to an element in $\Gal(\Q_{|G|}/\Q)$. Then $\Q(g)=\Q(g^{\sigma})$ and $g^G\neq (g^G)^{\sigma}$. It follows that there exist at least $|\Gal(\Q(g)/\Q)|$ different conjugacy classes with the same field of values. It follows that $|\Q(g):\Q|=|\Gal(\Q(g)/\Q)|\leq h(G)$.
\end{proof}
\end{lem}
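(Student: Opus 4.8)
The plan is to bound $|\Q(g):\Q|$ by producing exactly that many distinct conjugacy classes sharing a common field of values. Since $\Q(g)\subseteq\Q_{|g|}$ and cyclotomic extensions of $\Q$ are abelian, the extension $\Q(g)/\Q$ is Galois, so $|\Q(g):\Q|=|\Gal(\Q(g)/\Q)|$; it therefore suffices to exhibit $|\Gal(\Q(g)/\Q)|$ classes $K\in\Cl(G)$ with $\Q(K)=\Q(g)$, since any such family has size at most $h(G)$ by definition of $h$.

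The two tools I would use are the standard compatibilities between the Galois action on $\Cl(G)$ and the action on character values: for $\sigma\in\Gal(\Q_{|G|}/\Q)$ and $\chi\in\Irr(G)$ one has $\chi(g^\sigma)=\sigma(\chi(g))$ (expand $\chi(g)$ as a sum of roots of unity and apply $\sigma$), hence $\Q(g^\sigma)=\sigma(\Q(g))$; and, since distinct irreducible characters separate conjugacy classes, $\sigma$ fixes the class $g^G$ if and only if $\sigma$ acts trivially on $\Q(g)$. Given $\sigma\in\Gal(\Q(g)/\Q)$, I would extend it to some $\hat\sigma\in\Gal(\Q_{|G|}/\Q)$ — the restriction map $\Gal(\Q_{|G|}/\Q)\to\Gal(\Q(g)/\Q)$ is surjective — and set $K_\sigma=(g^G)^{\hat\sigma}$. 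Then $\Q(K_\sigma)=\hat\sigma(\Q(g))=\sigma(\Q(g))=\Q(g)$, so every $K_\sigma$ lies in $\{K\in\Cl(G):\Q(K)=\Q(g)\}$; the class $K_\sigma$ is independent of the chosen extension, because two extensions of $\sigma$ differ by an automorphism that acts trivially on $\Q(g)$ and hence fixes $g^G$; and $\sigma\mapsto K_\sigma$ is injective, since $K_{\sigma_1}=K_{\sigma_2}$ forces the automorphism $\hat\sigma_2\hat\sigma_1^{-1}$ to fix $g^G$, hence to act trivially on $\Q(g)$, whence $\sigma_1=\sigma_2$. This gives $|\Gal(\Q(g)/\Q)|\le|\{K\in\Cl(G):\Q(K)=\Q(g)\}|\le h(G)$, and the lemma follows.

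I do not anticipate a genuine obstacle here; this is a soft Galois-theoretic counting argument. The single point that needs a bit of care is the equivalence ``$\sigma$ fixes $g^G$ if and only if $\sigma$ is trivial on $\Q(g)$'', which underlies both the well-definedness and the injectivity above; it rests on $\chi(g^\sigma)=\sigma(\chi(g))$ together with the fact (second orthogonality relations, or linear independence of irreducible characters) that two elements of $G$ taking the same value on every irreducible character are conjugate. If one wants a maximally short write-up, one may dispense with the explicit injection and simply remark that the $|\Gal(\Q(g)/\Q)|$ classes $(g^G)^{\hat\sigma}$, for $\sigma$ ranging over $\Gal(\Q(g)/\Q)$, are pairwise distinct and all have field of values $\Q(g)$.
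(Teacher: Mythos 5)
Your argument is essentially the same as the paper's: extend each $\sigma\in\Gal(\Q(g)/\Q)$ to $\Gal(\Q_{|G|}/\Q)$, use that $\sigma$ fixes $g^G$ if and only if it is trivial on $\Q(g)$ to get $|\Gal(\Q(g)/\Q)|$ pairwise distinct classes all with field of values $\Q(g)$, and conclude from the definition of $h(G)$. The paper states this more tersely, but the well-definedness and injectivity checks you spell out are exactly what its one-line claim relies on, so the proposal is correct and matches the paper's approach.
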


Lemma \ref{krational} leads us to give the following definition. Let $G$ be a finite group and let $k\geq 1$ be an integer, we say that $g \in G$  is \textbf{$\frac{1}{k}$-rational} if $|\Q(g):\Q|\leq k$.
If $k=1$, then $g$ is said to be \textbf{rational} and if $k=2$, then $g$ is said to be \textbf{semi-rational}. The group $G$ is said to be $\frac{1}{k}$-rational (resp. rational or semi-rational) if every element is $\frac{1}{k}$-rational (resp. rational or semi-rational). Thus,  Lemma \ref{krational} shows that every group is $\frac{1}{h(G)}$-rational. Note that this definition of $\frac{1}{k}$-rational coincides with the notion of $k$-semi-rational groups introduced in \cite{CD}.

Given $G$ a finite group, we write $\pi(G)$ to denote the set prime divisors of $|G|$.  Gow \cite{G} proved  that $\pi(G)\subseteq \{2,3,5\}$ for any solvable rational group. In the case of semi-rational groups, Theorem 2 of \cite{CD} asserts that $\pi(G)\subseteq \{2,3,5,7,13,17\}$ for any solvable semi-rational group. It is not known whether there exists a semi-rational solvable group whose order is divisible by $17$.

It is worth mentioning that there exists a dual version of these results for characters. We say that a group $G$ is quadratic rational if $|\Q(\chi):\Q|\leq 2$ for any $\chi \in \Irr(G)$. Theorem A of \cite{Tent} shows that $\pi(G)\subseteq \{2,3,5,7,13\}$ for every solvable semi-rational group $G$.

We present the $\frac{1}{k}$-rationality from a different point of view. Given $g \in G$, we define $$\mathbf{B}_G(g)=\mathbf{N}_G(\langle g \rangle )/\mathbf{C}_G(g).$$ We observe that $\mathbf{B}_G(g)$ naturally embeds in $\Aut(\langle g \rangle)$. The following result determines the $\frac{1}{k}$-rationality in terms of $\mathbf{B}_G(g)$ (see Theorem 3.18 of \cite{N18}).

\begin{thm}\label{BG}
Let $G$ be a group and let $g \in G$. Then $$|\Q(g):\Q|=|\Aut(\langle g \rangle):\mathbf{B}_G(g)|.$$ 
\end{thm}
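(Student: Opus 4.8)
The plan is to identify $\Gal(\Q_n/\Q(g))$, where $n = \Exp(G)$, with a subgroup of $(\ZZ/m\ZZ)^\times$ where $m = |g|$, and match it up with the image of $\mathbf{B}_G(g)$ in $\Aut(\langle g\rangle)$. First I would recall that $\Aut(\langle g\rangle)\cong (\ZZ/m\ZZ)^\times$ canonically, via $r \mapsto (g \mapsto g^r)$, and that $\Gal(\Q_m/\Q)\cong (\ZZ/m\ZZ)^\times$ via $\sigma \mapsto r$ where $\sigma(\zeta)=\zeta^r$ for $\zeta$ a primitive $m$-th root of unity. The key point is that restriction gives a surjection $\Gal(\Q_n/\Q)\twoheadrightarrow \Gal(\Q_m/\Q)$ (since $m \mid n$ and these are abelian cyclotomic extensions), and under this surjection an element $\sigma \in \Gal(\Q_n/\Q)$ with $\sigma(\epsilon)=\epsilon^r$ (for $\epsilon$ primitive $n$-th root) restricts to the automorphism of $\Q_m$ sending a primitive $m$-th root to its $r$-th power, i.e. to the residue $r \bmod m$.

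The central claim is then: for $\sigma \in \Gal(\Q_n/\Q)$ with associated residue $r$, we have $\sigma \in \Gal(\Q_n/\Q(g))$ if and only if $g^r$ is $G$-conjugate to $g$. The forward direction is exactly the argument already used in Lemma \ref{Field Cont}: if $\sigma$ fixes $\Q(g)$ pointwise then $(g^G)^\sigma = g^G$, i.e. $g^r \sim g$. For the converse, if $g^r$ is conjugate to $g$ then $\chi(g^r)=\chi(g)$ for every $\chi\in\Irr(G)$; but $\chi(g)^\sigma = \chi(g^\sigma)=\chi(g^r)$ by the standard Galois action on character values (the values of $\chi$ on $g$ lie in $\Q_m \subseteq \Q_n$ and Galois acts by raising the relevant roots of unity to the $r$-th power), so $\chi(g)^\sigma=\chi(g)$ for all $\chi$, hence $\sigma$ fixes $\Q(g)=\Q(\chi(g)\mid\chi\in\Irr(G))$ pointwise. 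This establishes the claim.

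Now the set of residues $r \bmod m$ such that $g^r \sim g$ is precisely the image of $\mathbf{B}_G(g)$ in $\Aut(\langle g\rangle)\cong(\ZZ/m\ZZ)^\times$: indeed $g^r \sim g$ for some $r$ coprime to $m$ iff there is $x\in G$ with $x^{-1}gx = g^r$, which forces $x \in \mathbf{N}_G(\langle g\rangle)$ and the induced automorphism of $\langle g\rangle$ is ``raise to the $r$-th power''; conversely every automorphism in $\mathbf{B}_G(g)$ arises this way, and two elements of $\mathbf{N}_G(\langle g\rangle)$ induce the same power map iff they differ by an element of $\mathbf{C}_G(g)$. Composing the identifications, $\Gal(\Q_n/\Q(g))$ maps onto the image of $\mathbf{B}_G(g)$ under the restriction $\Gal(\Q_n/\Q)\to\Gal(\Q_m/\Q)\cong(\ZZ/m\ZZ)^\times$; moreover $\Q(g)\subseteq\Q_m$ so $\Gal(\Q_n/\Q_m)\leq\Gal(\Q_n/\Q(g))$ and the restriction map carries $\Gal(\Q_n/\Q(g))$ isomorphically onto $\Gal(\Q_m/\Q(g))$. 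Hence $|\Gal(\Q_m/\Q(g))| = |\mathbf{B}_G(g)|$, and therefore
\[
|\Q(g):\Q| = \frac{|\Gal(\Q_m/\Q)|}{|\Gal(\Q_m/\Q(g))|} = \frac{|\Aut(\langle g\rangle)|}{|\mathbf{B}_G(g)|} = |\Aut(\langle g\rangle):\mathbf{B}_G(g)|,
\]
as desired. The main obstacle — really the only subtle bookkeeping point — is keeping the two copies of $(\ZZ/m\ZZ)^\times$ (one from $\Aut(\langle g\rangle)$, one from $\Gal(\Q_m/\Q)$) consistently aligned through the restriction map from $\Gal(\Q_n/\Q)$, and checking that ``$g^r\sim g$'' cuts out exactly $\mathbf{B}_G(g)$ rather than something larger; both are routine once the dictionary above is set up, and in any case this is a restatement of \cite[Theorem 3.18]{N18}, so I would cite that for the details.
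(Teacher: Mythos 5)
Your argument is correct. Note that the paper does not actually prove Theorem \ref{BG}: it is quoted from \cite[Theorem 3.18]{N18}, so there is no in-paper proof to compare against. Your sketch is essentially the standard argument behind that reference: identify both $\Aut(\langle g\rangle)$ and $\Gal(\Q_m/\Q)$ with $(\ZZ/m\ZZ)^\times$ for $m=|g|$, show that the subgroup of residues $r$ with $g^r$ conjugate to $g$ is simultaneously the image of $\mathbf{B}_G(g)$ (conjugation by $x\in\mathbf{N}_G(\langle g\rangle)$ induces a power map, with kernel $\mathbf{C}_G(g)$) and, via the restriction $\Gal(\Q_n/\Q)\to\Gal(\Q_m/\Q)$, the subgroup $\Gal(\Q_m/\Q(g))$, and then conclude by the Galois correspondence. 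The one step you gloss over is the forward direction of your central claim: you appeal to Lemma \ref{Field Cont}, but that lemma itself merely \emph{asserts} that $\sigma\in\Gal(\Q_n/\Q(g))$ implies $(g^G)^\sigma=g^G$. The honest justification is that $\chi(g)^\sigma=\chi(g^r)$ for every $\chi\in\Irr(G)$, so $\sigma$ fixing $\Q(g)$ pointwise gives $\chi(g^r)=\chi(g)$ for all $\chi$, and then the second orthogonality relation (equivalently, the fact that irreducible characters separate conjugacy classes) forces $g^r$ to be conjugate to $g$. With that standard fact supplied, your proof is complete and agrees with the cited source.
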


From Theorem \ref{BG}, we deduce that $G$ is $\frac{1}{k}$-rational if and only if $|\mathbf{B}_G(g)|\geq \varphi(|g|)/k$ for any $g \in G$, where $|g|$ denotes the order of $g$ and $\varphi$ denotes  Euler's totient function. Moreover, this happens if and only if there exist $r_1,\ldots,r_k \in \{0,\ldots, o(g)-1\}$ such that, any generator of $\langle g \rangle $ is conjugated of $g^{r_i}$ for some $i$.

\medskip

Before continuing, we recall the structure of Galois groups of cyclotomic fields. Let $n=p^a$ for a prime $p$ and an integer $a\geq 1$. Then 
$$\Gal(\Q_{n}/\Q)\cong \mathsf{C}_{p^{a-1}(p-1)}$$ for $p>2$ and  
$$\Gal(\Q_{2^a}/\Q)\cong \begin{cases}
  1  &  \text{ for }  a=1 \\
  \mathsf{C}_2 &  \text{ for }   a=2\\
  \mathsf{C}_{2^{a-2}}\times \mathsf{C}_2 & \text{ for }   a\geq 3
\end{cases}$$
for $p=2$. Assume now that $n=p_1^{a_1}\ldots p_t^{a_t}$ with $2\leq p_1<p_2\ldots<p_t$, $a\geq 0$ and $a_i\geq 1$. Then
$$\Gal(\Q_{n}/\Q)\cong \Gal(\Q_{p_1^{a_1}}/\Q)\times \cdots \times \Gal(\Q_{p_t^{a_t}}/\Q).$$

Given $G$ a group and a prime $p$, we write $k_p(G)$ to denote the number of conjugacy classes whose elements are $p$-elements. The next  result shows that $k_p(G)$ is bounded in terms of $h(G)$. We also write $n(G)$ to denote the maximum of $k_p(G)$, where $p$ runs through all primes dividing $|G|$. The following is Theorem 1.1 of \cite{CKMS}.

\begin{thm}[Çinarci, Keller, Maróti, Simion]\label{CKMS}
Let $G$ be a group. Then $G$ is $n(G)$-bounded.
\end{thm}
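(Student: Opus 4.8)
The plan is to prove, by induction on $|G|$, that there is a non-decreasing function $B:\N\to\N$ with $|G|\le B(n(G))$; write $n=n(G)$. The single tool that is completely general and that drives the induction is a monotonicity statement: for every $N\nor G$ and every prime $p$ one has $k_p(G/N)\le k_p(G)$, and hence $n(G/N)\le n(G)$. Indeed, if $\bar y\in G/N$ is a $p$-element and $y\in G$ is any preimage, then its $p$-part $y_p$ is a $p$-element of $G$ mapping onto $(\bar y)_p=\bar y$; thus the map from classes of $p$-elements of $G$ to classes of $p$-elements of $G/N$ is surjective. Consequently every proper quotient of $G$ is already covered by the inductive hypothesis, which is what makes an inductive scheme feasible on the quotient side.

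Next I would bound the order of each chief factor of $G$ in terms of $n$ alone. Fix a minimal normal subgroup $N$. Suppose first $N\cong \mathbb F_p^{d}$ is abelian, and set $H=G/\mathbf{C}_G(N)\le \GL_d(p)$. Since $N\le\mathbf{C}_G(N)\neq 1$, the group $H$ is a proper quotient of $G$, so $n(H)\le n$ and, by induction, $|H|\le B(n)$. Every element of $N$ is a $p$-element, so the $G$-classes contained in $N$ — equivalently the $H$-orbits on $N$ — number at most $k_p(G)\le n$. As each nonzero orbit has size at most $|H|$, we get $p^{d}-1\le (n-1)|H|\le (n-1)B(n)$, so $|N|\le (n-1)B(n)+1$ is bounded in terms of $n$.

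Now suppose $N\cong S^{\ell}$ with $S$ nonabelian simple. Fixing a prime $q\mid|S|$ and a nontrivial class of $q$-elements $c$ of $S$, the tuples with entries in $\{1,c\}$ are $q$-elements of $N$ whose $G$-classes are separated by their number of nonidentity coordinates (a $G$-invariant, since conjugation permutes components and acts by automorphisms inside them); this yields at least $\ell+1$ classes, so $\ell\le k_q(G)-1\le n-1$. To bound $|S|$ I would use the place where the classification of finite simple groups enters: for each constant $c$ there are only finitely many nonabelian simple groups with at most $c$ classes of $q$-elements for every prime $q$ (for alternating groups this is visible already at $q=2$ or $3$ through partition counts, and for groups of Lie type through unipotent or semisimple class counts, which grow with the rank and the field size). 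Transferring this bound from $S$ to $G$ through the classes of $q$-elements that single components contribute to $N$ gives $|S|\le B_0(n)$, whence $|N|=|S|^{\ell}\le B_0(n)^{\,n-1}$. In either case every chief factor of $G$ has order at most some $\gamma(n)$.

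What remains — and what I expect to be the principal obstacle — is to bound the \emph{number} of chief factors, i.e.\ the chief length of $G$, in terms of $n$; only then does $|G|\le \gamma(n)^{\,\mathrm{length}}$ become a bound in $n$. A naive induction through a single $N$ cannot close, precisely because $|G|=|N|\,|G/N|$ would compound the bound multiplicatively at every step. The nonabelian contribution is controlled by the same weight-counting as above: a chief series with $m$ nonabelian factors produces at least $m+1$ classes of $2$-elements, so $m\le n-1$. The genuinely hard part is the abelian (solvable-radical) contribution, where I must show that a long chain of abelian chief factors forces some single prime to acquire many classes of $p$-elements. I would attack this in the spirit of the known Fitting-length bounds for solvable groups in terms of the class number, tracking how $p$-elements in successive layers of the Fitting series multiply up conjugacy classes, and then use the generalized Fitting subgroup to glue the solvable and nonabelian parts together. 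Making this length bound quantitative, uniformly across all primes, is the crux on which the whole argument rests.
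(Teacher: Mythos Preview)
The paper does not prove this statement at all: it is quoted verbatim as Theorem~1.1 of \cite{CKMS} and used as a black box in the proof of Theorem~\ref{thmA}. There is therefore no proof in the paper to compare your proposal against.

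As for the proposal itself, it is an outline rather than a proof, and you say so. Two remarks. First, there is a circularity in your handling of an abelian minimal normal subgroup: you bound $|N|$ via $p^{d}-1\le (n-1)|H|\le (n-1)B(n)$, but $B(n)$ is the very function you are trying to construct, and an induction on $|G|$ does not by itself supply a \emph{uniform} bound $B(n)$ valid for every group with $n(G)\le n$. This can be repaired (for instance, once the chief length is bounded by some $L(n)$ one can induct on the chief length instead, since $H$ is a quotient of $G/N$), but it needs to be said. Second, and more importantly, the step you flag as ``the crux'' really is the crux: showing that a long chain of abelian chief factors forces some single prime $p$ to acquire many conjugacy classes of $p$-elements is precisely where the work in \cite{CKMS} lies, and it is not a routine transcription of the classical arguments bounding Fitting length in terms of $k(G)$. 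The classes one produces at successive $p$-layers can fuse in $G$, and controlling that fusion so as to manufacture many \emph{distinct} classes of $p$-elements (rather than merely many classes) is the substantive content of the theorem. Your nonabelian bookkeeping (the $\ell+1$ classes via coordinate weight, and the CFSG-based bound on $|S|$ through $\Aut(S)$-classes of $q$-elements) is in the right direction, but without the solvable-radical length bound the argument does not close.
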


Now, we are ready to prove Theorem \ref{thmA}.

\begin{proof}[Proof of Theorem \ref{thmA}]
We claim that $n(G)\leq 3h(G)^2$ for any group $G$. Let $p$  be a prime divisor of $G$ and let us assume that $\Exp(G)$ divides $p^nm$ with $\gcd(p,m)=1$. It is no loss to assume that $n\geq 3$.

Let $x_1,x_2,\ldots, x_t$ be $t$ pairwise non-conjugated $p$-elements. Thus, for any $1\leq i \leq t$ we have that $\Q\subseteq \Q(x_i)\subseteq \Q_{p^n}$ and $|\Q(x_i):\Q|\leq h(G)$.

Assume first that $p>2$. In this case, $\Gal(\Q_{p^n}/\Q)$ is cyclic and hence, there exists at most $h(G)$ different fields $\F$ such that $\Q\subseteq \F\subseteq \Q_{p^n}$ and $|\F:\Q|\leq h(G)$. Let $\F$ be any of these fields. Then there exists at most $h(G)$ classes whose field of values is $\F$. Thus, $t\leq h(G)^2$.

Assume now that $p=2$. Since we are assuming  that $n\geq 3$, we have $\Gal(\Q_{p^n}/\Q)\cong \mathsf{C}_{2^{a-2}}\times \mathsf{C}_2$ and hence there exist at most $3h(G)$ different fields $\mathbb{F}$ such that $\Q\subseteq \mathbb{F}\subseteq \Q_{p^n}$ and $|\mathbb{F}:\Q|\leq h(G)$. Thus, reasoning as before, we have $t\leq 3h(G)^2$.

Thus, the claim holds. The result follows from Theorem \ref{CKMS}.
\end{proof}

\section{Proof of Theorem \ref{thmB}}\label{SecB}

\subsection{Preliminaries}

In the proof of Theorem \ref{thmB}, we will deal with groups with few rational conjugacy classes. We need to introduce some results on the number of rational conjugacy classes. Let $\F$ be a field extension of $\Q$. We define
$$\Cl_{\F}(G)=\{K\in \Cl(G)\mid \Q(K)\subseteq \F\}$$
and 
$$\Irr_{\F}(G)=\{\chi\in \Irr(G)\mid\Q(\chi)\subseteq \F\}.$$

It is easy to observe that $\chi \in \Irr_{\Q}(G)$ and $K \in \Cl_{\Q}(G)$ if and only if they are fixed points of the respective actions of $\Gal(\Q_{|G|}/\Q)$. It is also easy to observe that $|\Irr_{\Q}(G)|\leq f(G)$ and $|\Cl_{\Q}(G)|\leq h(G)$.

In general, $|\Irr_{\Q}(G)|\not=|\Cl_{\Q}(G)|$. For example, the \texttt{SmallGroup(32,42)} group in GAP \cite{gap} has $10$ rational characters and $8$ rational classes. Moreover, the group \texttt{SmallGroup(32,15)} has $6$ rational characters and $4$ rational classes.  Navarro and Tiep \cite{NT} studied the structure of groups with $|\Irr_{\Q}(G)|\leq 2$ or $|\Cl_{\Q}(G)|\leq 2$. Rossi \cite{Rossi} extended this result by studying the groups with $|\Irr_{\Q}(G)|=3$ or $|\Cl_{\Q}(G)|=3$. Summarizing the results in \cite{NT,Rossi} we have the following theorem.

 \begin{thm}[Navarro and Tiep; Rossi]\label{NumRatChar}
Let $G$ be a finite group. Then
\begin{itemize}
\item [(i)] $|\Cl_{\Q}(G)|=1$ if and only if $|\Irr_{\Q}(G)|=1$ if and only if $|G|$ is odd.
\item [(ii)] $|\Cl_{\Q}(G)|=2$ if and only if $|\Irr_{\Q}(G)|=2$.

\item [(iii)] If $|\Cl_{\Q}(G)|=3$, then $|\Irr_{\Q}(G)|=3$.
\end{itemize}
In particular,  if $|\Cl_{\Q}(G)|\leq 3$, then $|\Irr_{\Q}(G)|=|\Cl_{\Q}(G)|$.
\end{thm}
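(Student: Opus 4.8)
The plan is to study the simultaneous action of $A=\Gal(\Q_{|G|}/\Q)$ on $\Irr(G)$ and on $\Cl(G)$, whose sets of fixed points are exactly $\Irr_{\Q}(G)$ and $\Cl_{\Q}(G)$. The basic tool is Brauer's Permutation Lemma: for every $\sigma\in A$, the number of irreducible characters fixed by $\sigma$ equals the number of conjugacy classes fixed by $\sigma$. Averaging this equality over any subgroup $H\le A$ via Burnside's counting formula shows that the number of $H$-orbits on $\Irr(G)$ equals the number of $H$-orbits on $\Cl(G)$; taking $H=A$ says that the total number of Galois orbits on characters and on classes coincide. This symmetry underlies all three parts. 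Crucially, though, it controls orbit counts and \emph{not} the number of singleton orbits (the rational objects), which is precisely why $|\Irr_{\Q}(G)|=|\Cl_{\Q}(G)|$ can fail in general, as the examples with counts $(10,8)$ and $(6,4)$ recorded before the statement show, and why it survives only for small values.

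For part (i) I would argue classically. The trivial character and the identity class are always rational, so both counts are at least $1$. If $|G|$ is odd, then no non-identity element is conjugate to its inverse, so $\{1\}$ is the only real class; applying Brauer's Lemma to complex conjugation $\tau\in A$ gives that the number of real characters equals the number of real classes, whence the trivial character is the only real, and hence the only rational, irreducible character. This yields $|\Cl_{\Q}(G)|=|\Irr_{\Q}(G)|=1$. Conversely, if $|G|$ is even, an involution produces a second rational class, so $|\Cl_{\Q}(G)|\ge 2$; the matching count of real characters then forces a non-trivial real character, and upgrading this to a rational one is exactly the content of the Navarro--Tiep analysis, giving $|\Irr_{\Q}(G)|\ge 2$ as well.

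For parts (ii) and (iii) the strategy is to turn the hypothesis ``$G$ has few rational classes'' into a tight structural description of $G$ and then read off the number of rational characters. The relevant invariants are the normal subgroup $\mathbf{O}_{2'}(G)$ together with the $2$-local data, and the key mechanism is that the rationality of the class of a $p$-element $g$ is governed by $\mathbf{B}_G(g)$, as quantified in Theorem~\ref{BG}: every involution forces a rational class, and controlling the generators of $\langle g\rangle$ up to conjugacy controls which classes are rational. Navarro and Tiep carry out this analysis for two rational objects, producing a single structural characterization that is simultaneously equivalent to $|\Cl_{\Q}(G)|=2$ and to $|\Irr_{\Q}(G)|=2$, which gives the biconditional in~(ii); Rossi extends the classification to three rational classes and verifies that such groups have exactly three rational characters, which gives~(iii).

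The main obstacle is precisely this classification step: translating an upper bound on the number of Galois-fixed classes into an explicit description of $G$, and then computing the corresponding number of Galois-fixed characters, is the substantive work of \cite{NT,Rossi} and in the harder reductions relies on the classification of finite simple groups. Once those structural results are in hand, the equalities in (i)--(iii) follow by direct inspection, and the final ``in particular'' clause is an immediate consequence.
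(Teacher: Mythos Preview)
The paper does not give its own proof of this statement: it is recorded as a summary of the results in \cite{NT} and \cite{Rossi}, and the proof is deferred entirely to those references. Your proposal is consistent with this. After sketching the elementary ingredients (Brauer's Permutation Lemma, the Burnside-type argument for odd order in part~(i)), you explicitly acknowledge that the substantive content of (ii) and (iii) is the structural classification carried out in \cite{NT,Rossi}, parts of which rely on the classification of finite simple groups. So your proposal and the paper agree: this theorem is quoted from the literature rather than proved in the paper, and your outline is a fair summary of how those cited works proceed.
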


It is  conjectured  that if $|\Irr_{\Q}(G)|=3$, then $|\Cl_{\Q}(G)|=3$. Theorem C of \cite{Rossi} determines the structure of a possible exception to this conjecture.

It is also worth mentioning that Question 1 of \cite{DAL} asks whether $|\Cl_{\Q}(G)|=|\Irr_{\Q}(G)|$ for groups with $|\Irr_{\Q}(G)|\leq 5$.

The groups  \texttt{SmallGroup(32,42)} and  \texttt{SmallGroup(32,15)} show that the actions of $\Gal(\Q_{|G|}/\Q)$ on $\Irr(G)$ and on $\Cl(G)$ are not permutation isomorphic. However, we have the following result of Brauer's (see Theorem 6.32 of \cite{Isaacscar}),  which will allow us to relate  both actions.

 \begin{thm}[Brauer]\label{Iso1}
 Let $G$ be a group and suppose that the group $A$ acts
on $\Irr(G)$ and on $\Cl(G)$. Assume that the actions are compatible, in the sense that
for every $\alpha \in A$ we have $\chi^{\alpha}(g^{\alpha})=\chi(g)$ for every $g \in G$ and for every $\chi \in \Irr(G)$. Then the number of elements of $\Irr(G)$ fixed by $\alpha$ is equal to the number of elements of $\Cl(G)$ fixed by $\alpha$.
\end{thm}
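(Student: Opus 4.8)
The plan is to prove this by the classical character-table argument. First I would fix an enumeration $\Irr(G)=\{\chi_1,\dots,\chi_k\}$ and representatives $g_1,\dots,g_k$ of the conjugacy classes $K_1,\dots,K_k$, where $k=k(G)$, and form the character table $X=(\chi_i(g_j))_{1\le i,j\le k}$. The column orthogonality relations show that $X$ is an invertible matrix over $\mathbb{C}$; this invertibility is the only structural fact about $G$ that the argument needs.

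Next, fix $\alpha\in A$. Since $A$ permutes $\Irr(G)$ and $\Cl(G)$, the element $\alpha$ determines a permutation $\sigma$ of $\{1,\dots,k\}$ by $\chi_i^{\alpha}=\chi_{\sigma(i)}$ and a permutation $\tau$ of $\{1,\dots,k\}$ by $K_j^{\alpha}=K_{\tau(j)}$. I would then translate the compatibility hypothesis $\chi^{\alpha}(g^{\alpha})=\chi(g)$ into matrix language: evaluating it at $\chi=\chi_i$ and $g\in K_j$ gives $\chi_{\sigma(i)}(g_{\tau(j)})=\chi_i(g_j)$, that is, $X_{\sigma(i),\tau(j)}=X_{i,j}$ for all $i,j$.

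Letting $P$ and $Q$ be the permutation matrices associated with $\sigma$ and $\tau$ (normalised so that left multiplication by $P$ permutes rows by $\sigma^{-1}$ and right multiplication by $Q$ permutes columns by $\tau$, so that $\operatorname{tr}(P)$ and $\operatorname{tr}(Q)$ count the fixed points of $\sigma$ and $\tau$), the identity above says precisely that $P^{-1}XQ=X$. Hence $XQ=PX$, so $Q=X^{-1}PX$: the matrices $P$ and $Q$ are similar and therefore have the same trace. Since the trace of a permutation matrix is the number of fixed points of the corresponding permutation, this gives $|\{\chi\in\Irr(G):\chi^{\alpha}=\chi\}|=\operatorname{tr}(P)=\operatorname{tr}(Q)=|\{K\in\Cl(G):K^{\alpha}=K\}|$, which is the assertion.

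The only delicate point — and the step I would check most carefully — is the bookkeeping in passing to matrices: one must be consistent about whether $\sigma$ permutes row indices by $i\mapsto\sigma(i)$ or by its inverse, so that the relation genuinely comes out as the similarity $Q=X^{-1}PX$ and not as a non-conjugacy relation between $P$ and $Q$. Nothing here is deep: once $X_{\sigma(i),\tau(j)}=X_{i,j}$ is in hand, invertibility of $X$ does all the work. (Alternatively one could argue invariantly: $A$ acts on the space of complex class functions of $G$, which has the two bases $\Irr(G)$ and the class-indicator functions, permuted by $\sigma$ and $\tau$ respectively; hence the two permutation characters of $A$ coincide, and in particular agree at $\alpha$. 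The matrix version is simply the most economical to write down.)
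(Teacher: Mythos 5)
Your proof is correct and is essentially the standard argument: the paper does not prove this statement itself but cites Theorem 6.32 of Isaacs' \emph{Character Theory of Finite Groups}, whose proof is exactly the character-table similarity argument you give (invertibility of $X$ from orthogonality, $PX=XQ$ from compatibility, equal traces of similar permutation matrices). Your bookkeeping caveat is also harmless, since the trace of a permutation matrix counts fixed points regardless of whether one ends up conjugating $P$ or $P^{-1}$.
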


It is easy to see that the actions of $\Gal(\Q_{|G|}/\Q)$ on $\Irr(G)$ and $\Cl(G)$ satisfy the compatibility condition. We also notice that $\chi \in \Irr_{\mathbb{R}}(G)$ and $K\in \Cl_{\mathbb{R}}(G)$ if and only if they are fixed by $\sigma$, where $\sigma$ denotes the complex conjugation automorphism. Thus, using Theorem \ref{Iso1}, we deduce that  $|\Cl_{\mathbb{R}}(G)|=|\Irr_{\mathbb{R}}(G)|$.

 From Theorem \ref{Iso1}, it is also possible to deduce the following result (see Corollary 6.34 of \cite{Isaacscar}).

\begin{cor}[Brauer]\label{Iso2}
Let $G$ be a group and suppose that the group $A$ acts
on $\Irr(G)$ and on $\Cl(G)$. Assume that the actions are compatible, in the sense that
for every $\alpha \in A$ we have $\chi^{\alpha}(g^{\alpha})=\chi(g)$ for every $g \in G$ and for every $\chi \in \Irr(G)$. Then the number of orbits in the action of $A$ on $\Irr(G)$ is equal to the number of orbits in the action of $A$ on  $\Cl(G)$.
\end{cor}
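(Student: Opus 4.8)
The plan is to deduce this from Brauer's permutation lemma (Theorem~\ref{Iso1}) together with the Cauchy--Frobenius (Burnside) orbit-counting formula. First I would reduce to the case where $A$ is finite: since $\Irr(G)$ and $\Cl(G)$ are finite sets, the two actions are given by homomorphisms $A\to\Sym(\Irr(G))$ and $A\to\Sym(\Cl(G))$ whose common kernel $N$ has finite index, and $\bar A=A/N$ acts on both $\Irr(G)$ and $\Cl(G)$ with exactly the same orbits (and the same fixed-point sets) as $A$. The compatibility identity $\chi^{\alpha}(g^{\alpha})=\chi(g)$ depends only on the image of $\alpha$ in $\bar A$, so $\bar A$ still acts compatibly, and we may replace $A$ by $\bar A$; thus $A$ may be assumed finite.

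Next, for each $\alpha\in A$ let $a(\alpha)$ denote the number of elements of $\Irr(G)$ fixed by $\alpha$ and $b(\alpha)$ the number of elements of $\Cl(G)$ fixed by $\alpha$. The compatibility hypothesis is precisely what is needed to invoke Theorem~\ref{Iso1} with this $A$ and this $\alpha$, which yields $a(\alpha)=b(\alpha)$ for every $\alpha\in A$. Finally, by the Cauchy--Frobenius lemma the number of $A$-orbits on $\Irr(G)$ equals $\frac{1}{|A|}\sum_{\alpha\in A}a(\alpha)$, and the number of $A$-orbits on $\Cl(G)$ equals $\frac{1}{|A|}\sum_{\alpha\in A}b(\alpha)$; since the summands agree term by term, the two averages are equal, giving the claim.

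There is no genuine obstacle here: the statement is a formal consequence of Theorem~\ref{Iso1} and orbit counting. The only point that requires a word of care is the reduction to a finite group, since a priori $A$ need not be finite (in the applications $A$ is a Galois group $\Gal(\Q_{|G|}/\Q)$, which \emph{is} finite, but it is cleanest to record the reduction once) whereas the Burnside formula is usually stated for finite groups. Alternatively one could cite a version of the orbit-counting formula valid for arbitrary groups acting with finite orbits, but passing to $A/N$ avoids the issue entirely.
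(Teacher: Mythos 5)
Your argument is correct and coincides with the standard deduction that the paper relies on: the paper gives no proof of its own for this corollary, citing Corollary 6.34 of \cite{Isaacscar}, and the proof there is exactly what you wrote --- apply Theorem~\ref{Iso1} to each $\alpha\in A$ to get equal fixed-point counts and then average via the Cauchy--Frobenius orbit-counting formula. Your extra remark on reducing to finite $A$ by passing to the quotient by the common kernel is correct and harmless (in the paper's only application $A=\Gal(\Q_{|G|}/\Q)$ is already finite).
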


\subsection{Special cases}

In this subsection, we prove that Theorem \ref{thmB} holds for groups satisfying some special conditions.  We begin by using Lemma \ref{krational} to prove Theorem \ref{thmB} for groups with $h(G)=1$.

\begin{pro}\label{h1}
$h(G)=1$ if and only if $G=1$.
\begin{proof}
Assume that $G$ is a group with $h(G)=1$.  By Lemma \ref{krational}, all conjugacy classes of $G$ are rational. Now, the condition $h(G)=1$ implies that $G$ possesses only one rational conjugacy class and hence, $G$ is a group with a unique conjugacy class. Thus, $G=1$.
\end{proof}
\end{pro}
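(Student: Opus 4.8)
The statement to prove is Proposition \ref{h1}: $h(G) = 1$ if and only if $G = 1$. The plan is to establish the nontrivial direction (the converse being immediate, since the trivial group has a single conjugacy class, which is rational), and the argument is short enough that one essentially only needs to combine two facts already available in the excerpt.

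First I would observe that $h(G)$ counts the maximum number of conjugacy classes sharing a common field of values, so in particular $h(G) = 1$ forces that \emph{every} field extension $F/\Q$ arises as the field of values of at most one conjugacy class; in other words, the map $K \mapsto \Q(K)$ from $\Cl(G)$ to extensions of $\Q$ is injective. Next I would invoke Lemma \ref{krational}, which gives $|\Q(g):\Q| \le h(G) = 1$ for every $g \in G$; hence $\Q(K) = \Q$ for every $K \in \Cl(G)$, i.e.\ $G$ is a rational group. Combining these two observations: all conjugacy classes have the same field of values $\Q$, and by injectivity there can be at most one of them, so $k(G) = 1$ and therefore $G = 1$.

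I do not anticipate a genuine obstacle here, since the proposition is an easy consequence of Lemma \ref{krational}. The only point requiring a moment's care is making explicit that $h(G) = 1$ simultaneously (a) bounds each local field of values by $\Q$ via Lemma \ref{krational}, and (b) forbids two classes from sharing that field, so one should state both uses of the hypothesis cleanly rather than conflating them. The resulting proof is essentially the one displayed in the excerpt: assume $h(G) = 1$, deduce every class is rational by Lemma \ref{krational}, note that the hypothesis then allows only one rational class, and conclude $G$ has a unique conjugacy class, hence $G = 1$.
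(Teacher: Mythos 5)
Your proof is correct and follows essentially the same route as the paper: apply Lemma \ref{krational} to force every class to be rational, then use $h(G)=1$ again to conclude there is only one conjugacy class. The only (harmless) addition is your explicit remark on the trivial converse direction.
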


There are specific situations in which the actions of $\Gal(\Q_{|G|}/\Q)$ on $\Irr(G)$ and on $\Cl(G)$ are permutation isomorphic. We say that $x \in G$  is inverse semi-rational if every generator of $\langle x \rangle$ is conjugated to either $x$ or to $x^{-1}$. A group is said to be inverse semi-rational if all its elements are inverse semi-rational. We remark that the inverse semi-rational groups coincide with the \textbf{cut} groups, which appear in the study of the units of the ring $\mathbb{Z}G$ (see the introduction of \cite{BCJM}).  It is  possible to decide whether a group is inverse semi-rational by looking at the fields of values of its irreducible characters. The following is Proposition 2.1 of \cite{BCJM}.

\begin{pro}\label{CUT}
Let $G$ be a group. Then the following are equivalent:

\begin{itemize}
    \item [(i)] $G$ is inverse semi-rational.

    \item [(ii)] For each $x\in G$, we have that either $\Q(x)=\Q$ or $\Q(x)=\Q(\sqrt{-d})$ for some  $d  \geq 1$.

     \item [(ii)] For each $\chi \in \Irr(G)$, we have that either $\Q(\chi)=\Q$ or $\Q(\chi)=\Q(\sqrt{-d})$ for some  $d \geq 1$.
\end{itemize}

    In particular, inverse semi-rational  groups are both quadratic rational and semi-rational.
\end{pro}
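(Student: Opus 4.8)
The plan is to prove the two equivalences (i) $\Leftrightarrow$ (ii) and (ii) $\Leftrightarrow$ (iii) separately: the first by a hands-on analysis of the $\Gal(\Q_{|G|}/\Q)$-orbits on $\Cl(G)$, and the second by a transfer argument based on Brauer's orbit-counting corollary (Corollary \ref{Iso2}) applied to two different groups of Galois automorphisms.

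For (i) $\Leftrightarrow$ (ii) I would let $\tau\in\Gal(\Q_{|G|}/\Q)$ be (the restriction of) complex conjugation, which acts on $G$ by $g\mapsto g^{-1}$, so that the $\langle\tau\rangle$-orbit of a class $x^G$ is $\{x^G,(x^{-1})^G\}$. By definition, $x$ is inverse semi-rational exactly when the $\Gal(\Q_{|G|}/\Q)$-orbit of $x^G$ is contained in $\{x^G,(x^{-1})^G\}$, i.e.\ exactly when it coincides with the $\langle\tau\rangle$-orbit of $x^G$. I would then use two standard facts: the $\Gal(\Q_{|G|}/\Q)$-orbit of $x^G$ has size $|\Q(x):\Q|$ (equivalently $|\Aut(\langle x\rangle):\mathbf{B}_G(x)|$, by Theorem \ref{BG}), and $\tau$ fixes $x^G$ if and only if $x$ is real if and only if $\Q(x)\subseteq\R$. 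From these: if $x$ is inverse semi-rational the orbit has size $1$ or $2$; size $1$ gives $\Q(x)=\Q$; size $2$ forces $x$ non-real (otherwise the $\langle\tau\rangle$-orbit, hence the $\Gal(\Q_{|G|}/\Q)$-orbit, would be a singleton), so $\Q(x)$ is imaginary quadratic. Conversely $\Q(x)=\Q$ makes $x$ rational, and $\Q(x)=\Q(\sqrt{-d})$ makes the orbit a $2$-set containing $(x^{-1})^G\neq x^G$, hence equal to $\{x^G,(x^{-1})^G\}$; in both cases $x$ is inverse semi-rational. This gives (i) $\Leftrightarrow$ (ii), and the same argument verbatim on $\Irr(G)$ (using $\bar\chi=\chi^{\tau}$ and that $\Q(\chi)\subseteq\R$ iff $\chi$ is real) shows that (iii) is equivalent to the statement that the $\Gal(\Q_{|G|}/\Q)$-orbit of each $\chi\in\Irr(G)$ coincides with its $\langle\tau\rangle$-orbit.

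It then remains to see that the $\Gal(\Q_{|G|}/\Q)$- and $\langle\tau\rangle$-orbits coincide on $\Cl(G)$ if and only if they coincide on $\Irr(G)$. Here I would set $N$ to be the number of $\Gal(\Q_{|G|}/\Q)$-orbits on $\Cl(G)$, which by Corollary \ref{Iso2} also equals the number of $\Gal(\Q_{|G|}/\Q)$-orbits on $\Irr(G)$, and $M$ to be the number of $\langle\tau\rangle$-orbits on $\Cl(G)$, which equals the number of $\langle\tau\rangle$-orbits on $\Irr(G)$ by Corollary \ref{Iso2} applied to the $\langle\tau\rangle$-action (compatible, being the restriction of a compatible action). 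Since $\langle\tau\rangle\leq\Gal(\Q_{|G|}/\Q)$, the $\langle\tau\rangle$-orbits refine the $\Gal(\Q_{|G|}/\Q)$-orbits on each of $\Cl(G)$ and $\Irr(G)$, so $M\geq N$ and the two partitions agree on $\Cl(G)$ (resp.\ on $\Irr(G)$) precisely when $M=N$. As neither $M$ nor $N$ depends on whether we count on $\Cl(G)$ or on $\Irr(G)$, the equality $M=N$ is a single condition, simultaneously equivalent to (ii) and to (iii); this yields (ii) $\Leftrightarrow$ (iii). The final clause is then immediate, since (ii) says $G$ is semi-rational and (iii) says $G$ is quadratic rational.

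The step I expect to need the most care is the case analysis in (i) $\Leftrightarrow$ (ii): semi-rationality alone does not give (ii) — a real element $x$ with $|\Q(x):\Q|=2$ has $\Q(x)$ real quadratic and violates (ii) — so one must consistently separate real from non-real elements, and track the parallel distinction between real and non-real characters on the $\Irr(G)$ side, rather than arguing only with the degrees $|\Q(x):\Q|$ and $|\Q(\chi):\Q|$.
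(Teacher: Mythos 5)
Your proof is correct, but note that the paper does not actually prove this proposition: it is imported verbatim as Proposition~2.1 of \cite{BCJM}, so there is no in-paper argument to compare against. What you have written is a self-contained derivation using only tools the paper already has on hand. Your (i)~$\Leftrightarrow$~(ii) step is sound: the $\Gal(\Q_{|G|}/\Q)$-orbit of $x^G$ is the set of classes of generators of $\langle x\rangle$ and has size $|\Q(x):\Q|$ (Theorem~\ref{BG} plus the orbit--stabilizer identification of the stabilizer with $\Gal(\Q_{|G|}/\Q(x))$), it always contains the $\langle\tau\rangle$-orbit $\{x^G,(x^{-1})^G\}$, and your real/non-real case split correctly converts ``orbit $=\{x^G,(x^{-1})^G\}$'' into ``$\Q(x)=\Q$ or $\Q(x)$ imaginary quadratic.'' Your (ii)~$\Leftrightarrow$~(iii) step is the more interesting one: applying Corollary~\ref{Iso2} twice, once to the full Galois group and once to the subgroup $\langle\tau\rangle$, and observing that a refinement of partitions with equally many parts must be trivial, reduces both (ii) and (iii) to the single equation $M=N$, which is insensitive to whether one counts on $\Cl(G)$ or on $\Irr(G)$. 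This is the same mechanism the paper later exploits in Lemma~\ref{Iso3}, Corollary~\ref{QuadSemi} and Theorem~\ref{Main} (matching Galois orbits on classes and characters via Brauer's lemma), but you deploy it with the subgroup $\langle\tau\rangle$ rather than with orbit sizes; it is in the spirit of, though not identical to, the argument in \cite{BCJM}. Your closing caveat is also well taken: semi-rationality plus quadratic rationality alone would not rule out real quadratic fields of values, so the explicit tracking of realness via $\tau$ is genuinely needed and you handle it correctly. The final clause follows from (ii) and (iii) exactly as you say.
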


Theorem A of \cite{BCJM} proves that the actions of $\Gal(\Q_{|G|}/\Q)$ on $\Irr(G)$ and on $\Cl(G)$ are isomorphic for inverse semi-rational groups. In particular, $|\Cl_{\Q}(G)|=|\Irr_{\Q}(G)|$ for inverse semi-rational groups. This result can be easily extended to the more general class of uniformly semi-rational groups introduced in \cite{USR}. The uniformity property guarantees  the existence of an element of the Galois group that acts as a derangement on non-rational conjugacy classes and irreducible characters, implying again $|\Cl_{\Q}(G)|=|\Irr_{\Q}(G)|$. The key to prove this fact is the following lemma, which was proved as a claim during the proof of Theorem 3.1 in \cite{BCJM}.

\begin{lem}\label{Iso3}
Let $\Gamma$ be a group acting on two finite sets $X$ and $Y$ satisfying the following conditions:
\begin{itemize}

\item[(i)] Each $\sigma\in \Gamma$ fixes the same number of points in $X$ and $Y$.

\item[(ii)] Each $\Gamma$-orbit on $X$ and $Y$ has size at most $2$.

\end{itemize}
Then the actions of $\Gamma$ on $X$ and $Y$ are permutation isomorphic.
\end{lem}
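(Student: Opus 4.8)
The plan is to prove this by counting orbits of each size and matching them up, exploiting the fact that under hypothesis (ii) every orbit has size $1$ or $2$. First I would let $a_X$ denote the number of fixed points of $\sigma$ acting on $X$, averaged appropriately — actually, the cleaner route is to work orbit by orbit without a specific $\sigma$, so let me set up the following invariants: let $f_X$ be the number of $\Gamma$-orbits of size $1$ on $X$ (equivalently, global fixed points of the whole action), let $t_X$ be the number of orbits of size $2$ on $X$, and similarly $f_Y, t_Y$ for $Y$. Then $|X| = f_X + 2t_X$ and $|Y| = f_Y + 2 t_Y$. The goal is to show $f_X = f_Y$ and $t_X = t_Y$; once we have that, we can biject the size-$1$ orbits of $X$ with those of $Y$ arbitrarily, biject the size-$2$ orbits of $X$ with those of $Y$ arbitrarily, and within each size-$2$ orbit pair up the two points in a way compatible with the $\Gamma$-action (a size-$2$ orbit is a transitive $\Gamma$-set on two points, and any two such are isomorphic as $\Gamma$-sets via either of the two available bijections), yielding a $\Gamma$-equivariant bijection $X \to Y$.

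The heart of the matter is therefore proving $f_X = f_Y$ and $t_X = t_Y$. For the first, I would apply hypothesis (i): the number of global fixed points of the action on $X$ is $f_X$, and similarly $f_Y$ for $Y$ — but wait, (i) only says each individual $\sigma$ fixes the same number of points in $X$ as in $Y$, not that the global fixed-point sets match in size. However, taking $\sigma = 1$ (or rather, noting that a point is a global fixed point iff it lies in a size-$1$ orbit) doesn't immediately give it. The right tool is Burnside's lemma applied twice: the number of orbits on $X$ is $\frac{1}{|\Gamma|}\sum_{\sigma \in \Gamma} |\mathrm{Fix}_X(\sigma)|$, and by (i) this equals $\frac{1}{|\Gamma|}\sum_{\sigma \in \Gamma} |\mathrm{Fix}_Y(\sigma)|$, the number of orbits on $Y$. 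Hence $f_X + t_X = f_Y + t_Y$. Separately, taking $\sigma = 1$ in (i) — no; instead observe that a point $x$ lies in a size-$1$ orbit iff it is fixed by \emph{every} $\sigma \in \Gamma$. I would count $\sum_{\sigma} |\mathrm{Fix}_X(\sigma)|$ differently: each size-$1$ orbit contributes $|\Gamma|$ to this sum, and each size-$2$ orbit $\{x, x'\}$ contributes $|\mathrm{stab}(x)| + |\mathrm{stab}(x')| = |\Gamma|/2 + |\Gamma|/2 = |\Gamma|$ as well. So $\sum_\sigma |\mathrm{Fix}_X(\sigma)| = |\Gamma|(f_X + t_X)$, which just re-derives Burnside. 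To pin down $f_X$ alone I need a second equation: consider the permutation character or simply $\sum_\sigma |\mathrm{Fix}_X(\sigma)|^2$, or more simply count $|X| = f_X + 2 t_X$. Combined with $f_X + t_X = f_Y + t_Y$ and $f_Y + 2 t_Y = |Y|$, I still need $|X| = |Y|$, which follows by taking $\sigma = 1$ in (i). So: $f_X + 2t_X = |X| = |Y| = f_Y + 2t_Y$ and $f_X + t_X = f_Y + t_Y$; subtracting gives $t_X = t_Y$, hence $f_X = f_Y$.

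With $f_X = f_Y$ and $t_X = t_Y$ established, I would then construct the permutation isomorphism explicitly as described above: fix any bijection between the set of size-$1$ orbits of $X$ and that of $Y$, fix any bijection $\phi$ between the size-$2$ orbits, and for each size-$2$ orbit $O = \{x, x'\}$ of $X$ with $\phi(O) = \{y, y'\}$, map $x \mapsto y$ and $x' \mapsto y'$. Equivariance on size-$1$ orbits is trivial; on a size-$2$ orbit, $\Gamma$ acts through a surjection to $\mathsf{C}_2$ with kernel $\mathrm{stab}(x) = \mathrm{stab}(x')$ (these stabilizers coincide since each has index $2$ and the nontrivial element swaps the two points, so it normalizes and in fact the two stabilizers are equal), and the same surjection governs the action on $\{y, y'\}$ up to the identification; one checks the two surjections $\Gamma \to \mathrm{Sym}(\{x,x'\})$ and $\Gamma \to \mathrm{Sym}(\{y,y'\})$ have the same kernel, because a kernel of index $2$ is determined by... hmm, actually two different index-$2$ subgroups could both arise. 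This is the one genuine subtlety: I must choose the bijection $\phi$ and the pairing so that matched orbits have the \emph{same} stabilizer subgroup. So I would refine the argument: group the size-$2$ orbits of $X$ by their common stabilizer $H$ (an index-$2$ subgroup of $\Gamma$), and likewise for $Y$; it suffices to show that for each index-$2$ subgroup $H \leq \Gamma$, the number of size-$2$ $X$-orbits with stabilizer $H$ equals the corresponding number for $Y$. This follows from hypothesis (i) applied to elements $\sigma \in \Gamma \setminus H$: counting $\sum_{\sigma \in \Gamma \setminus H} |\mathrm{Fix}_X(\sigma)|$, a size-$1$ orbit contributes $|\Gamma \setminus H| = |\Gamma|/2$, a size-$2$ orbit with stabilizer $H$ contributes $0$ (its nontrivial elements lie outside $H$... wait, its stabilizer elements lie \emph{in} $H$, so points are fixed only by $H$, contributing $0$ from $\Gamma\setminus H$), and a size-$2$ orbit with stabilizer $H' \neq H$ contributes $|H' \setminus H| = |\Gamma|/2$ summed over its two points appropriately — I would work out that this linear system over all index-$2$ subgroups, together with (i), forces the orbit-count vectors for $X$ and $Y$ to agree. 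The main obstacle, then, is precisely this bookkeeping: ensuring the isomorphism can be built \emph{respecting stabilizers}, for which I extract from (i) not just Burnside's global count but the refined counts over cosets of each index-$2$ subgroup.
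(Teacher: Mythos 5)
Your argument is correct, and it is worth noting that the paper does not actually prove this lemma: it is quoted from the proof of Theorem 3.1 of \cite{BCJM}, so there is no in-text proof to compare against. You correctly identify the one genuine subtlety, namely that matching the numbers of size-$1$ and size-$2$ orbits is not enough; one must match size-$2$ orbits having the \emph{same} point stabilizer (which for an orbit of size $2$ is a normal subgroup of index $2$, shared by both points, as you observe). Your refined count closes the argument, and in fact the ``linear system'' you defer resolves in one line: for an index-$2$ subgroup $H$, writing $f_X$, $t_X$ for the numbers of size-$1$ and size-$2$ orbits on $X$ and $m_X(H)$ for the number of size-$2$ orbits with stabilizer $H$, one gets
$$\sum_{\sigma\in \Gamma\setminus H}|\mathrm{Fix}_X(\sigma)|=\tfrac{|\Gamma|}{2}\bigl(f_X+t_X-m_X(H)\bigr),$$
and equating this with the corresponding sum for $Y$ (legitimate by hypothesis (i)) and using $f_X+t_X=f_Y+t_Y$ from Burnside gives $m_X(H)=m_Y(H)$ directly, with no inversion of a system over all index-$2$ subgroups needed. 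Two small corrections: the intermediate quantity is $|H'\setminus H|=|\Gamma|/4$ for distinct index-$2$ subgroups $H\neq H'$ (not $|\Gamma|/2$; the contribution $|\Gamma|/2$ that you state is correct only after doubling over the two points of the orbit), and the first paragraph's derivation of $f_X=f_Y$, $t_X=t_Y$ from $|X|=|Y|$ (taking $\sigma=1$ in (i)) together with Burnside is fine as written. With the displayed identity supplied, your proof is complete and self-contained.
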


Let $G$ be any group, let $\Gamma=\Gal(\Q_{|G|}/\Q)$,  $X=\Cl(G)$ and $Y=\Irr(G)$. Condition (i) of Lemma \ref{Iso3} holds by Theorem \ref{Iso1}. Observe also that Condition (ii) of Lemma \ref{Iso3} holds if $G$ is both quadratic rational and semi-rational. With these comments, we have the following.

\begin{cor}\label{QuadSemi}
Let $G$ be a quadratic rational and semi-rational group. Then, the actions of $\Gal(\Q_{|G|}/\Q)$ on $\Irr(G)$ and $\Cl(G)$ are permutation isomorphic.
\end{cor}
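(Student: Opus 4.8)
The plan is to verify the two hypotheses of Lemma \ref{Iso3} for the group $\Gamma = \Gal(\Q_{|G|}/\Q)$ acting on $X = \Cl(G)$ and $Y = \Irr(G)$, and then invoke that lemma directly. The compatibility condition $\chi^\sigma(g^\sigma) = \chi(g)$ for these two actions is standard (and was already noted after Theorem \ref{Iso1} in the excerpt), so Theorem \ref{Iso1} applies: every $\sigma \in \Gamma$ fixes the same number of elements of $\Irr(G)$ as of $\Cl(G)$. This is exactly condition (i) of Lemma \ref{Iso3}.

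Next I would check condition (ii), that every $\Gamma$-orbit on $\Cl(G)$ and on $\Irr(G)$ has size at most $2$. For $\Cl(G)$: let $K = g^G \in \Cl(G)$. The stabilizer of $K$ in $\Gamma$ is precisely $\Gal(\Q_{|G|}/\Q(g))$ by the definition of the action recalled before Lemma \ref{Field Cont}, so the orbit of $K$ has size $|\Gal(\Q(g)/\Q)| = |\Q(g):\Q|$. Since $G$ is semi-rational, $|\Q(g):\Q| \leq 2$, so every orbit on $\Cl(G)$ has size at most $2$. The same argument with $\chi \in \Irr(G)$ in place of $g$ and $\Q(\chi)$ in place of $\Q(g)$ shows that the orbit of $\chi$ has size $|\Q(\chi):\Q| \leq 2$ because $G$ is quadratic rational; hence every orbit on $\Irr(G)$ also has size at most $2$. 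This establishes condition (ii).

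With both hypotheses of Lemma \ref{Iso3} in hand, the conclusion is immediate: the actions of $\Gamma = \Gal(\Q_{|G|}/\Q)$ on $\Irr(G)$ and on $\Cl(G)$ are permutation isomorphic. There is no real obstacle here; the only point requiring a word of care is the identification of the orbit sizes with the degrees $|\Q(g):\Q|$ and $|\Q(\chi):\Q|$, which follows from the orbit–stabilizer theorem once one observes that the stabilizer of a class (resp. character) is the Galois group fixing its field of values. I would write this up as roughly three short sentences, citing Theorem \ref{Iso1} for (i), the semi-rationality and quadratic rationality hypotheses for (ii), and Lemma \ref{Iso3} for the conclusion.

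\begin{proof}
Let $\Gamma = \Gal(\Q_{|G|}/\Q)$, $X = \Cl(G)$ and $Y = \Irr(G)$, with the actions described above. Since these actions are compatible in the sense of Theorem \ref{Iso1}, each $\sigma \in \Gamma$ fixes the same number of points in $X$ as in $Y$, so condition (i) of Lemma \ref{Iso3} holds. For condition (ii), note that the $\Gamma$-stabilizer of a class $g^G$ is $\Gal(\Q_{|G|}/\Q(g))$, so the orbit of $g^G$ has size $|\Q(g):\Q| \leq 2$ because $G$ is semi-rational; similarly the orbit of $\chi \in \Irr(G)$ has size $|\Q(\chi):\Q| \leq 2$ because $G$ is quadratic rational. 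Thus every $\Gamma$-orbit on $X$ and on $Y$ has size at most $2$, and Lemma \ref{Iso3} gives that the actions of $\Gamma$ on $\Irr(G)$ and $\Cl(G)$ are permutation isomorphic.
\end{proof}
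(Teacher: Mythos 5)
Your proof is correct and follows exactly the paper's route: the paper likewise verifies condition (i) of Lemma \ref{Iso3} via Theorem \ref{Iso1} and condition (ii) from the quadratic rationality and semi-rationality hypotheses, then invokes Lemma \ref{Iso3}. Your added justification of the orbit sizes via the orbit--stabilizer theorem is a correct elaboration of a point the paper leaves implicit.
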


Now, we prove the following result.

\begin{thm}\label{Main}
Let $G$ be a group with $|\Irr_{\Q}(G)|=|\Cl_{\Q}(G)|$. Then $G$ is quadratic rational if and only if $G$ is semi-rational. Moreover, in such a case, the actions of $\Gal(\Q_{|G|}/\Q)$ on $\Irr(G)$ and $\Cl(G)$ are permutation isomorphic.
\begin{proof}
Let $\Gamma=\Gal(\Q_{|G|}/\Q)$ and let $t=|\Irr_{\Q}(G)|=|\Cl_{\Q}(G)|$. By Theorem \ref{Iso2}, we have that $\Gamma$ has the same number of orbits on $\Irr(G)$ and on $\Cl(G)$. Let $k$ be  this number and observe that $k\geq t$. Now, if we decompose $\Irr(G)$ and $\Cl(G)$ into $\Gamma$-orbits, then
$$\Irr(G)=\Irr_{\Q}(G)\cup(\bigcup_{i=t+1}^{k}T_i)$$
and
$$\Cl(G)=\Cl_{\Q}(G)\cup(\bigcup_{i=t+1}^{k}L_i),$$
where the $T_i$ and the $L_i$ are the non-trivial $\Gamma$-orbits of its respective actions (in particular, $|T_i|\geq 2$ and $|L_i|\geq2$).

Assume first that $G$ is quadratic rational. In this case, $|T_i|=2$ for all $i$. Since $|\Irr(G)|=|\Cl(G)|$, we have 
$$|\Irr_{\Q}(G)|+2(k-t)=|\Irr(G)|=|\Cl(G)|=|\Cl_{\Q}(G)|+\sum_{i=t+1}^{k}|L_i|$$
Finally, by hypothesis, we have that 
$$\sum_{i=t+1}^{k}|L_i|=2(k-t).$$
Since $|L_i|\geq 2$ for all $i$, this forces $|L_i|=2$ for all $i$, or equivalently, that $G$ is semi-rational.

Now, if $G$ is semi-rational, then with a very similar argument we can deduce that $G$ is quadratic rational.

The second part follows from Corollary \ref{QuadSemi}.
\end{proof}
\end{thm}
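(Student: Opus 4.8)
The plan is to exploit the orbit decompositions of $\Irr(G)$ and $\Cl(G)$ under $\Gamma = \Gal(\Q_{|G|}/\Q)$ together with the two consequences of Brauer's permutation lemma available to us: Theorem \ref{Iso2} (equal number of orbits) and the fact that $|\Irr(G)| = |\Cl(G)|$. First I would set $t = |\Irr_{\Q}(G)| = |\Cl_{\Q}(G)|$ and let $k$ be the common number of $\Gamma$-orbits on the two sets, noting $k \geq t$ since the $t$ rational points are themselves fixed (hence singleton) orbits. Writing the non-rational orbits as $T_{t+1}, \dots, T_k$ on the character side and $L_{t+1}, \dots, L_k$ on the class side, we have $|T_i| \geq 2$ and $|L_i| \geq 2$ for all $i$, and the total count gives
\[
t + \sum_{i=t+1}^{k} |T_i| \;=\; |\Irr(G)| \;=\; |\Cl(G)| \;=\; t + \sum_{i=t+1}^{k} |L_i|.
\]
If $G$ is quadratic rational, every non-rational $\chi$ lies in an orbit of size exactly $2$, so the left sum is $2(k-t)$; then the right sum equals $2(k-t)$ as well, and since each $|L_i| \geq 2$ this forces $|L_i| = 2$ for all $i$, which is exactly the statement that $G$ is semi-rational. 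The reverse implication is the mirror image of this argument, interchanging the roles of $T_i$ and $L_i$.

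For the final sentence, once $G$ is known to be both quadratic rational and semi-rational, Corollary \ref{QuadSemi} applies directly and gives the permutation isomorphism of the two $\Gamma$-actions; under the hood this is Lemma \ref{Iso3}, whose hypothesis (i) is Theorem \ref{Iso1} and whose hypothesis (ii) is precisely the conjunction of the two semi-rationality conditions.

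There is essentially no hard obstacle here: the whole argument is a counting bookkeeping on orbit sizes, and every nontrivial input (Brauer's lemma in both forms, Lemma \ref{Iso3}) is already established in the excerpt. The only point requiring a little care is to be sure that the $t$ rational orbits on each side are genuinely singletons and are correctly matched up in the two decompositions — i.e. that the "non-trivial orbit" bookkeeping does not accidentally double-count or omit a fixed point — but this is immediate from the observation that $K \in \Cl_{\Q}(G)$ (resp. $\chi \in \Irr_{\Q}(G)$) precisely when $K$ (resp. $\chi$) is $\Gamma$-fixed. So I expect the proof to be short, with the symmetry between the two implications handled by the phrase "an analogous argument."
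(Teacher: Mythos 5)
Your proposal is correct and follows essentially the same route as the paper: decompose $\Irr(G)$ and $\Cl(G)$ into $\Gamma$-orbits, use Theorem \ref{Iso2} to equate the number of orbits, compare the size counts to force all nontrivial orbits on the other side to have size $2$, and invoke Corollary \ref{QuadSemi} for the permutation isomorphism. No gaps.
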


We mention that when $|G|$ is odd (equivalently, $|\Irr_{\Q}(G)|=|\Cl_{\Q}(G)|=1$) a strengthened version  of Theorem \ref{Main} was known. By Theorem A of \cite{N}, we have that the number of quadratic character coincides with the number of semi-rational classes of odd order groups. In particular, if $G$ is odd, then $G$ is quadratic rational if and only if it is semi-rational. This was observed in the first paragraph of Section 6 of \cite{Tent}.

Now, we prove Theorem \ref{thmB} for groups with $h(G)=2$.

\begin{thm}\label{h2}
Let $G$ be a finite group. If $h(G)=2$, then $f(G)=2$.
\begin{proof}
Assume now that $h(G)=2$. In this case, $G$ is a group $|\Cl_{\Q}(G)|\leq 2$ and hence, by Theorem \ref{NumRatChar}, we have  $|\Irr_{\Q}(G)|=|\Cl_{\Q}(G)|=2$. In addition, by Lemma \ref{krational}, we have that $|\Q(K):\Q|\leq 2$ for all $K\in \Cl(G)$ and hence, $G$ is a semi-rational group.

Thus, $G$ is a semi-rational group with $|\Irr_{\Q}(G)|=|\Cl_{\Q}(G)|$ and hence, by Theorem \ref{Main}, we have that the actions of  $\Gal(\Q_{|G|}/\Q)$ on $\Irr(G)$ and $\Cl(G)$ are permutation isomorphic. Therefore, $f(G)=h(G)=2$.
\end{proof}
\end{thm}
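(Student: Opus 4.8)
The plan is to prove Theorem \ref{h2} by the same strategy that worked for $h(G)=1$: reduce to a situation where Theorem \ref{Main} applies. First I would observe that $h(G)=2$ gives two immediate pieces of information. On one hand, $G$ has at most two rational conjugacy classes, so by Theorem \ref{NumRatChar}(ii) (and the trivial case (i)) we get $|\Irr_{\Q}(G)|=|\Cl_{\Q}(G)|$; note that $|\Cl_{\Q}(G)|\geq 1$ always, and if it equals $1$ then $|G|$ is odd and $|\Irr_{\Q}(G)|=1$ too. On the other hand, Lemma \ref{krational} gives $|\Q(g):\Q|\leq h(G)=2$ for every $g\in G$, which is precisely the statement that $G$ is semi-rational.

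Having both facts, the heart of the matter is to invoke Theorem \ref{Main}: a group with $|\Irr_{\Q}(G)|=|\Cl_{\Q}(G)|$ is semi-rational if and only if it is quadratic rational, and in that case the actions of $\Gal(\Q_{|G|}/\Q)$ on $\Irr(G)$ and on $\Cl(G)$ are permutation isomorphic. Since our $G$ is semi-rational, it is therefore quadratic rational and the two actions are permutation isomorphic. A permutation isomorphism sends the $\Gal(\Q_{|G|}/\Q)$-orbits on $\Cl(G)$ bijectively onto those on $\Irr(G)$, preserving orbit sizes, and it matches up fields of values: under a permutation isomorphism $\phi$, one checks that $\Q(\phi(K))=\Q(K)$ for each class $K$ (the Galois stabilizer of $K$ equals that of $\phi(K)$, and the field of values is determined by that stabilizer). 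Consequently, for every subfield $F$ of $\Q_{|G|}$ the number of classes with field of values exactly $F$ equals the number of irreducible characters with field of values exactly $F$; taking the maximum over $F$ yields $f(G)=h(G)=2$.

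The main obstacle — really the only subtlety — is the bookkeeping needed to pass from "permutation isomorphic actions" to "equal multiplicity functions $F\mapsto \#\{\text{classes with }\Q(\cdot)=F\}$". One must be careful that the permutation isomorphism $\phi\colon\Cl(G)\to\Irr(G)$ is $\Gamma$-equivariant for $\Gamma=\Gal(\Q_{|G|}/\Q)$, so that the $\Gamma$-stabilizer of $K$ coincides with that of $\phi(K)$; then $\Q(K)=\mathrm{Fix}(\mathrm{Stab}_\Gamma(K))=\mathrm{Fix}(\mathrm{Stab}_\Gamma(\phi(K)))=\Q(\phi(K))$ by Galois correspondence. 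Everything else is routine: the arithmetic of orbit counting is already packaged inside Theorem \ref{Main} and Corollary \ref{QuadSemi}, so once those are in hand the proof is short. I would present it essentially as the three-line argument already sketched in Theorem \ref{h2}, perhaps expanding the final sentence to make explicit why permutation-isomorphic Galois actions force $f(G)=h(G)$.
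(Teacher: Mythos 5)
Your proposal is correct and follows essentially the same route as the paper: Theorem \ref{NumRatChar} gives $|\Irr_{\Q}(G)|=|\Cl_{\Q}(G)|$, Lemma \ref{krational} gives semi-rationality, and Theorem \ref{Main} then yields the permutation isomorphism of the two Galois actions, from which $f(G)=h(G)=2$ follows by matching stabilizers (hence fields of values) under the equivariant bijection. If anything, you are slightly more careful than the paper, which asserts $|\Cl_{\Q}(G)|=2$ outright even though $|\Cl_{\Q}(G)|=1$ (odd order, e.g.\ $G=\mathsf{C}_3$) is compatible with $h(G)=2$; your observation that the argument only needs $|\Irr_{\Q}(G)|=|\Cl_{\Q}(G)|$ covers that case cleanly.
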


As a consequence, we can prove Theorem \ref{thmB} for groups with $|\Cl_{\Q}(G)|=1$.

\begin{pro}\label{OddCase}
    Let $G$ be a group with $h(G)\leq 3$ and $|\Cl_{\Q}(G)|=1$. Then the actions of $\Gal(\Q_n/\Q)$ on $\Irr(G)$ and $\Cl(G)$ are permutation isomorphic. In particular, $f(G)=h(G)$.
    \begin{proof}
        Since $|\Cl_{\Q}(G)|=1$, it follows that $|G|$ is odd. By hypothesis, we know that $|\Q(g):\Q|\leq 3$ for any $g \in G$.

    We claim that $|\Q(g):\Q|\leq 2$ for any $g\in G$. Assume for contradiction that there exists $g\in G$ such that $|\Q(g):\Q|=3$.  Since $|\Q(g):\Q|=3$, we have that $|\Aut(\langle g \rangle):\mathbf{B}_G(g)|=3$. Then, $\varphi(|g|)=|\Aut(\langle g \rangle)|=3|\mathbf{B}_G(g)|$. Now, we observe that $|\mathbf{B}_G(g)|$ is odd (since it divides $|G|$, which is odd), and hence $\varphi(|g|)$ is odd. This forces $g=1$, which is impossible. The claim follows.

We conclude that $G$ is a group with $h(G)=2$ and the result follows by Theorem \ref{h2}.
    \end{proof}
\end{pro}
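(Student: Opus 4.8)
The plan is to show that $G$ is semi-rational and then quote Theorem \ref{Main}. Since $|\Cl_\Q(G)|=1$, Theorem \ref{NumRatChar}(i) gives that $|G|$ is odd and that $|\Irr_\Q(G)|=1$; in particular $|\Irr_\Q(G)|=|\Cl_\Q(G)|$, so Theorem \ref{Main} will be applicable once we know that $G$ is semi-rational. By Lemma \ref{krational} we already have $|\Q(g):\Q|\le h(G)\le 3$ for every $g\in G$, so the only thing left to rule out is an element of $G$ whose field of values has degree exactly $3$ over $\Q$.

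To exclude this, I would invoke Theorem \ref{BG}: if $g\in G$ satisfies $|\Q(g):\Q|=3$, then $|\Aut(\langle g\rangle):\mathbf{B}_G(g)|=3$, whence $\varphi(|g|)=|\Aut(\langle g\rangle)|=3|\mathbf{B}_G(g)|$. Since $\mathbf{B}_G(g)=\mathbf{N}_G(\langle g\rangle)/\mathbf{C}_G(g)$ is a section of $G$, its order divides $|G|$ and is hence odd, so $\varphi(|g|)$ is odd. But $\varphi(n)$ is odd only for $n\in\{1,2\}$, and $|g|$ is odd, forcing $g=1$ — contradicting $|\Q(1):\Q|=1\ne 3$. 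Thus $G$ is semi-rational.

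Now $G$ is semi-rational with $|\Irr_\Q(G)|=|\Cl_\Q(G)|$, so Theorem \ref{Main} yields that the actions of $\Gal(\Q_{|G|}/\Q)$ on $\Irr(G)$ and on $\Cl(G)$ are permutation isomorphic. A permutation isomorphism intertwining the two actions identifies, for corresponding $\chi\in\Irr(G)$ and $K\in\Cl(G)$, the stabilizers in $\Gal(\Q_{|G|}/\Q)$, and hence the fixed fields $\Q(\chi)$ and $\Q(K)$; therefore the multiset of fields of values of the irreducible characters equals that of the conjugacy classes, and in particular $f(G)=h(G)$.

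I do not expect a real obstacle here: the argument is a short chain of previously established facts, and the only point needing a moment's care is the reduction to semi-rationality, where one uses that $\varphi(|g|)=3|\mathbf{B}_G(g)|$ is impossible with $|\mathbf{B}_G(g)|$ odd and $|g|>1$. As an alternative to citing Theorem \ref{Main}, one could observe directly that since $G$ is semi-rational with a single rational class, for each quadratic (hence Galois) field $F\subseteq\Q_{|G|}$ the set $\{K\in\Cl(G)\mid\Q(K)=F\}$ is $\Gal(\Q_{|G|}/\Q)$-stable with all orbits of size $2$, so of even cardinality; together with $h(G)\le 3$ this forces $h(G)\le 2$, and one then concludes with Proposition \ref{h1} and Theorem \ref{h2}.
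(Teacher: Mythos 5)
Your proposal is correct and follows essentially the same route as the paper: the key step (ruling out $|\Q(g):\Q|=3$ via Theorem \ref{BG} and the parity of $\varphi(|g|)=3|\mathbf{B}_G(g)|$ in an odd-order group) is identical. The only cosmetic difference is that you conclude by invoking Theorem \ref{Main} directly, whereas the paper reduces to the case $h(G)=2$ and cites Theorem \ref{h2} (whose proof itself passes through Theorem \ref{Main}); if anything, your version more directly justifies the permutation-isomorphism assertion in the statement.
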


\subsection{The general case}

In this subsection we complete the proof of Theorem \ref{thmB}. By the results in the previous subsection, we may assume that  $h(G)=3$ and $|\Cl_{\Q}(G)|\in \{2,3\}$.

\begin{pro}\label{RatOrders}
    Let $G$ be a group with $|\Cl_{\Q}(G)|\in \{2,3\}$. Then $\{|g|\mid  \Q(g)=\Q\}$ is one of the following
\begin{itemize}
    \item [(i)] $\{1,2\}$.

    \item[(ii)] $\{1,2,4\}$.

    \item[(iii)] $\{1,2,q\}$, where $q$ is an odd prime.
\end{itemize}
\begin{proof}
Since $|\Cl_{\Q}(G)|>1$, we have that $|G|$ is even and hence, there exists $z \in G$ an involution. Then $\{1^G, z^G\}$ are two rational classes. If $|\Cl_{\Q}(G)|=2$, then they are the unique rational classes in $G$ and hence case (i) holds.

Assume now that $|\Cl_{\Q}(G)|=3$. Let $x\in G$ be a rational element such that $x^G\not \in \{1^G, z^G\}$. Thus, the rational conjugacy classes of $G$ are $\{1^G, z^G, x^G\}$. It only remains to determine the order of $x$.

Assume first that there exists $q$ an odd prime dividing $|x|$. Then $|x|=qm$ for an integer $m\geq 1$. We have that $\Q\subseteq \Q(x^m)\subseteq \Q(x)=\Q$ and hence $(x^m)^G$ is a rational class. Since $|x^m|=q>2$, we deduce that $(x^m)^G\not \in \{1^G, z^G\}$. Thus, $(x^m)^G=x^G$, which forces $m=1$ or equivalently, $|x|=q$. Thus, case (iii) holds.

Assume now that $|x|=2^a$ for some $a\geq 1$. Assume that $a\geq 3$. Reasoning as before, we have that $x^2$ is a rational element with $2<|x^2|<|x|$. Thus, $(x^2)^G\not \in \{1^G, z^G, x^G\}$, which is a contradiction. This forces either $|x|=2$ or $|x|=4$. If $|x|=2$, then case (i) holds and if $|x|=4$, then case (ii) holds.
\end{proof}
\end{pro}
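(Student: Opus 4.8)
The plan is to analyze the set $R = \{|g| \mid \Q(g) = \Q\}$ directly, exploiting the fact that this set is closed under taking ``primitive part'' orders: if $g$ is rational then by \Cref{Field Cont} every power $g^j$ is rational, so every divisor of every element of $R$ lies in $R$ (realized by an appropriate power of a suitable element). Together with the hypothesis $|\Cl_{\Q}(G)| \in \{2,3\}$, which bounds the number of rational classes, this should pin down $R$ very tightly.

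First I would observe that since $|\Cl_{\Q}(G)| > 1$, the group $G$ has even order, hence contains an involution $z$, and $\{1^G, z^G\}$ are two distinct rational classes; so $1, 2 \in R$. If $|\Cl_{\Q}(G)| = 2$ these are the only rational classes, forcing $R = \{1,2\}$, which is case (i). So I would then assume $|\Cl_{\Q}(G)| = 3$ and let $x^G$ be the third rational class, with $x \notin \{1,z\}$ up to conjugacy. The task reduces to determining $|x|$. Here I would use the closure property: for any prime power $p^b$ dividing $|x|$ with $p^b > 2$, an appropriate power of $x$ has order exactly $p^b$ (take $x$ raised to $|x|/p^b$), is rational by \Cref{Field Cont}, and is not conjugate to $1$ or $z$ since its order exceeds $2$; hence it lies in the third rational class, forcing $p^b = |x|$. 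This immediately shows $|x|$ is a prime power, and moreover that $|x| \in \{2\} \cup \{q : q$ odd prime$\} \cup \{4\}$: if $|x| = 2^a$ with $a \ge 3$ then $x^2$ would have order $2^{a-1}$ with $2 < 2^{a-1} < 2^a$, giving a \emph{fourth} distinct rational class (namely with an order different from $1, 2,$ and $2^a$), a contradiction. Similarly $|x| = p^b$ with $p$ odd and $b \ge 2$ is excluded because then $x^p$ has order $p^{b-1}$; if $b - 1 \ge 2$ this order is $> 2$ and differs from $p^b$, again a fourth class, while even $b = 2$ gives $x^p$ of order $p$, distinct from $p^2$, a contradiction — so in the odd case $b = 1$. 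Assembling: $|x| = 2$ gives case (i), $|x| = 4$ gives case (ii), and $|x| = q$ an odd prime gives case (iii).

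The main subtlety — really the only place needing care — is the bookkeeping that distinguishes conjugacy classes by the \emph{orders} of their elements: I must be sure that when I produce a rational element whose order is not $1$, $2$, or $|x|$, it genuinely represents a class outside $\{1^G, z^G, x^G\}$, which is immediate since conjugate elements have equal order. One should also double-check the degenerate possibility that $x$ itself could be forced to have order $1$ or $2$ and hence coincide with an already-listed class; but this is fine, it simply lands us back in case (i) rather than producing a contradiction. No deeper input (no \Cref{NumRatChar}, no Brauer lemma) is needed for this proposition — it is purely a divisibility-and-counting argument built on \Cref{Field Cont}.
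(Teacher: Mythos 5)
Your proposal is correct and follows essentially the same route as the paper: both arguments rest on the fact that powers of rational elements are rational (\Cref{Field Cont}) and on distinguishing the at most three rational classes by the orders of their elements, with only a cosmetic difference in how the case analysis on $|x|$ is organized (you work uniformly with prime-power divisors, the paper splits into an odd-prime-divisor case and a $2$-power case). No gaps.
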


\begin{pro}\label{ClassFields}
    Let $G$ be a group with $h(G)\leq 3$ and $|G|$ even.   Then for each  $g\in G$ there exists a prime $p$ such that $\Q(g)\subseteq \Q_{p^3}$.
    \begin{proof}Since $h(G)\leq 3$ and $|G|$ is even, then  one of the cases (i), (ii) or (iii) of Proposition \ref{RatOrders} must hold.  It suffices to study the possible values of $|g|$ in each of the cases.

Assume first that $\{|x|\mid \Q(x)=\Q\}=\{1,2\}$. Let $g \in G$.  If $\Q(g)=\Q$, then the result holds by taking any prime $p$.  Assume that $\Q(g)\not=\Q$.  

Let $1<j$ be a divisor of of $|g|$. Then $\Q\subseteq \Q(g^j)\subseteq \Q(g)$. Since  $|\Q(g):\Q|\in \{2,3\}$, we deduce that either $\Q(g^j)=\Q$ or $\Q(g^j)=\Q(g)$. If $\Q(g^j)=\Q(g)$, then counting the Galois conjugates of $g^G$ and $(g^j)^G$, we have at least $4$ classes whose field of values is $\Q(g)$, so we must have  $\Q(g^j)=\Q$ since $h(G)\leq 3$. Thus, $|g^j|\in \{1,2\}$ for any $j>1$ dividing $|g|$. This forces $|g|=2$, $|g|=4$ or $|g|=p$ for a prime $p$. In the first two cases $\Q(g)\subseteq \Q_8$ and in the third case $\Q(g)\subseteq \Q_p\subseteq \Q_{p^3}$.

Assume now that $\{|x|\mid  \Q(x)=\Q\}=\{1,2,4\}$. In this case, reasoning as before, we have that either $|g|= p$, for a prime $p$, or $|g|\in\{1,2,4,8\}$. In the first case $\Q(g)\subseteq \Q_{p^3}$ and in the second case $\Q(g)\subseteq \Q_8$.

Finally, let us assume that $\{|x|\mid \Q(x)=\Q\}=\{1,2,q\}$ for an odd prime $q$. In this case, we have that either $|g|=p$ for a prime $p$, or $|g|\in\{1,2,4,2q,q^2\}$. Since $\Q_{2q}=\Q_q$, we have that if $|x|\in \{2q,q^2\}$, then $\Q(g)\subseteq \Q_{q^3}$.
    \end{proof}
\end{pro}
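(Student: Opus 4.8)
The plan is to combine Proposition \ref{RatOrders} with the argument already used in Proposition \ref{OddCase}: namely, for any $g$ with $\Q(g)\neq\Q$, control the divisors of $|g|$ by observing that each proper divisor $j>1$ yields either $\Q(g^j)=\Q$ or $\Q(g^j)=\Q(g)$, and the latter is impossible when $h(G)\leq 3$ because counting the Galois conjugates of $g^G$ and $(g^j)^G$ would give at least four classes with the same field of values (using that $|\Q(g):\Q|\geq 2$ so each of those two classes has at least two Galois conjugates, and they are distinct since $g$ and $g^j$ have different orders). So the \emph{real} content is purely numerical: which $|g|$ can occur given that every proper nontrivial power $g^j$ of $g$ has order in the set $S=\{|x|\mid \Q(x)=\Q\}$, which by Proposition \ref{RatOrders} is one of $\{1,2\}$, $\{1,2,4\}$, or $\{1,2,q\}$.

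First I would dispatch the trivial case $\Q(g)=\Q$: then $\Q(g)=\Q\subseteq \Q_{p^3}$ for any prime $p$, so there is nothing to prove. Assume henceforth $\Q(g)\neq\Q$, so $|g|\notin S$. Now I run through the three cases. In case $S=\{1,2\}$: every $j>1$ dividing $|g|$ has $|g^j|\in\{1,2\}$. If $|g|$ had two distinct prime divisors, or a prime divisor $p$ with $p^2\mid |g|$ and $p$ odd, one quickly produces a proper power of order $>2$; the only survivors are $|g|\in\{4\}\cup\{p: p\text{ prime}\}$ together with $|g|=2$ (excluded since then $\Q(g)=\Q$). For $|g|=4$ we have $\Q(g)\subseteq\Q_4\subseteq\Q_{2^3}$, and for $|g|=p$ we have $\Q(g)\subseteq\Q_p\subseteq\Q_{p^3}$. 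Case $S=\{1,2,4\}$ is identical except that now $|g|=8$ is also allowed (its proper powers have orders $1,2,4\in S$), and $\Q(g)\subseteq\Q_8=\Q_{2^3}$. Case $S=\{1,2,q\}$ with $q$ odd: the proper powers of $g$ must have order in $\{1,2,q\}$, which forces $|g|\in\{4, 2q, q^2\}\cup\{p:p\text{ prime}\}$ (again $\{1,2,q\}$ itself is excluded as giving $\Q(g)=\Q$). For $|g|=4$ we get $\Q(g)\subseteq\Q_8$; for $|g|=p$ prime, $\Q(g)\subseteq\Q_{p^3}$; and for $|g|\in\{2q,q^2\}$ we use $\Q_{2q}=\Q_q$ to conclude $\Q(g)\subseteq\Q_{q^2}\subseteq\Q_{q^3}$.

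The main obstacle is not conceptual but bookkeeping: one must be careful in the "counting Galois conjugates" step to check that $g^G$ and $(g^j)^G$ are genuinely distinct classes (true because $|g^j|<|g|$ when $j>1$ divides $|g|$ and $g^j$ is a proper power, so in particular $\Q(g^j)=\Q(g)\neq\Q$ forces $g^j\neq 1$ and $g^G\neq (g^j)^G$), and that $\Q(g)\neq\Q$ guarantees the orbit of $g^G$ under $\Gal(\Q_{|G|}/\Q)$ has size $\geq 2$, so we really do get $\geq 2+2=4$ classes with field of values $\Q(g)$, contradicting $h(G)\leq 3$. The rest is the elementary number theory of which integers have all proper divisors (equivalently all orders of proper powers) lying in a three-element set, exactly as indicated above.
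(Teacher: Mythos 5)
Your proposal is correct and follows essentially the same route as the paper: reduce via Proposition \ref{RatOrders} to the three possible sets of rational orders, show every proper power $g^j$ must be rational because $\Q(g^j)=\Q(g)$ would produce at least four classes with field of values $\Q(g)$, and then enumerate the admissible orders of $g$. Your extra care in checking that the two Galois orbits are disjoint (via $|g^j|<|g|$) is a welcome clarification of a step the paper leaves implicit, but the argument is the same.
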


Now, we prove Theorem \ref{thmB}.

\begin{thm}
    Let $G$ be a group with $h(G)\leq 3$, then $h(G)=f(G)$.
    \begin{proof}
Since $h(G)$, we have that $|\Cl_{\Q}(G)|\leq 3$ and that $|\Cl_{\Q}(G)|=|\Irr_{\Q}(G)|$ by Theorem \ref{NumRatChar}. If $|\Cl_{\Q}(G)|=1$, then the result follows by Proposition \ref{OddCase}. Let us assume that $|\Cl_{\Q}(G)| \in \{2,3\}$.

By Proposition \ref{ClassFields}, we have that $\Q(G)\subseteq \Q_n$ for some $n=2^3p_{2}^{3}\cdots p_{t}^3$ for $2=p_1<p_2<\ldots <p_t$ primes. If $i>2$, then $p_i$ is odd and hence $\Gal(\Q_{p_{i}^3}/\Q)=\langle \sigma_i \rangle$ for an automorphism $\sigma_i$ with $|\sigma_i|=(p_i-1)p_i^2$. In the case $p_1=2$, we have that $\Gal(\Q_8/\Q)=\langle \sigma_0 \rangle\times \langle \sigma_1 \rangle$ with $|\sigma_0|=|\sigma_1|=2$. 

Let $K\in \Cl(G)$ with $\Q(K)\not=\Q$. We aim is to prove that $$|\{T \in \Cl(G)\mid \Q(T)=\Q(K)\}|=|\{\chi\in \Irr(G)\mid\Q(\chi)=\Q(K)\}|.$$ By Proposition \ref{ClassFields}, we have that there exists $i \in \{1,\ldots, t\}$ such that $\Q(K)\subseteq \Q_{p_i^3}$. In any case, we have that $\Gal(\Q_{p_i^3}/\Q(K))=\langle \tau \rangle$, where $\tau$ does not fix any element in $\Q_{p_i^3}\setminus \Q(K)$.

Assume first that $i=1$, that is $\Q(K)\subseteq \Q_8$. Let us set $$\sigma=(\tau, \sigma_2,\ldots,\sigma_t)\in \Gal(\Q_n/\Q)$$
By Brauer's Theorem, we have that
$$|\{T \in \Cl(G)\mid T^{\sigma}=T\}|=|\{\chi \in \Irr(G)\mid \chi^{\sigma}=\chi\}|.$$

Let $\chi \in \Irr(G)$ and $g \in G$. We claim that $\chi(g)^{\sigma}=\chi(g)$ if and only if $\chi(g)\in \Q(K)$. To see this, first recall that exists $j$ such that $\chi(g)\in \Q(g)\subseteq \Q_{p_j^3}$. If $j\geq 2$, then $\chi(g)=\chi(g)^{\sigma}=\chi(g)^{\sigma_j}$, which forces $\chi(g)\in \Q$. If $j=1$, then $\chi(g)\in \Q_8$ and $\chi(g)=\chi(g)^{\sigma}=\chi(g)^{\tau}$ and hence $\chi(g)\in \Q(K)$. Thus, $\chi(g)^{\sigma}=\chi(g)$ if and only if $\chi(g)\in \Q(K)$. 

Thus, 
$$\{T \in \Cl(G)\mid T^{\sigma}=T\}=\Cl_{\Q}(G)\cup \{T \in \Cl(G)\mid \Q(T)=\Q(K)\}$$
and 
$$\{\chi \in \Irr(G)\mid \chi^{\sigma}=\chi\}=\Irr_{\Q}(G)\cup \{\chi \in \Irr(G)\mid \Q(\chi)= \Q(K)\}.$$
        Therefore, applying Theorem \ref{Iso1}, and the fact that $|\Cl_{\Q}(G)|=|\Irr_{\Q}(G)|$, we deduce that $|\{T \in \Cl(G)\mid \Q(T)=\Q(K)\}|=|\{\chi\in \Irr(G)\mid \Q(\chi)=\Q(K)\}|$.

        Thus, the result holds when $\Q(K)\subseteq \Q_8$. In particular, we deduce that $|\Irr_{\Q_8}(G)|=|\Cl_{\Q_8}(G)|.$

\bigskip

        Now, let us assume that $\Q(K)\subseteq \Q_{p_i^{3}}$ for some $i \geq 2$. Let us set 
        $$\sigma=(1,\sigma_2, \ldots, \sigma_{i-1},\tau, \sigma_{i+1},\ldots, \sigma_t).$$
        Applying Brauer's Theorem again, we have that 
        $$|\{T \in \Cl(G)\mid T^{\sigma}=T\}|=|\{\chi \in \Irr(G)\mid \chi^{\sigma}=\chi\}|.$$
Reasoning as before,  we see that if $\chi \in \Irr(G)$ and $g \in G$, then  $\chi(g)^{\sigma}=\chi(g)$ if and only if $\chi(g)\in \Q(K)$ or $\chi(g)\in \Q_8$. Moreover, 
 by Proposition \ref{ClassFields}, we have that either $\Q(T)= \Q(K)$ or $\Q(T)\subseteq \Q_8$ for any $T \in \Cl(G)$ satisfying $T^{\sigma}=T$. Thus, 
$$\{T \in \Cl(G)\mid T^{\sigma}=T\}=\Cl_{\Q_8}(G)\cup \{T \in \Cl(G)\mid \Q(T)=\Q(K)\}$$
and 
$$\{\chi \in \Irr(G)\mid \chi^{\sigma}=\chi\}=\Irr_{\Q_8}(G)\cup \{\chi \in \Irr(G)\mid \Q(K)\subseteq \Q(\chi)\}.$$

Since $|\Irr_{\Q_8}(G)|=|\Cl_{\Q_8}(G)|$, and $|\Gal(\Q(K)/\Q)|=|\{T \in \Cl(G)\mid \Q(T)=\Q(K)\}|$ we deduce that
$$|\Gal(\Q(K)/\Q)|=|\{\chi \in \Irr(G)\mid \Q(K)\subseteq \Q(\chi)\}|.$$
Let $\psi \in \Irr(G)$ with $\Q(K)\subseteq \Q(\psi)$. If $\gamma\in \Gal(\Q(\psi)/\Q)$, then $\Q(K)\subseteq \Q(\psi^{\gamma})$ and hence 
$$|\Gal(\Q(K)/\Q)|=|\{\chi \in \Irr(G)\mid \Q(K)\subseteq \Q(\chi)\}|\geq |\Gal(\Q(\psi)/\Q)|.$$
This forces $\Q(\psi)=\Q(K)$ and the set $\{\chi \in \Irr(G)\mid \Q(K)\subseteq \Q(\chi)\}$ is just the set of Galois conjugates of $\psi$. As a consequence $$|\{T \in \Cl(G)\mid \Q(T)=\Q(K)\}|=|\{\chi\in \Irr(G)\mid \Q(\chi)=\Q(K)\}|$$
and the result follows.
    \end{proof}
\end{thm}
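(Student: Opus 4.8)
The plan is to reduce everything to the case $|\Cl_{\Q}(G)|\in\{2,3\}$ (the case $|\Cl_{\Q}(G)|=1$ being Proposition~\ref{OddCase}) and then, for a fixed non-rational class $K$, to build an explicit element $\sigma$ of $\Gal(\Q_n/\Q)$ whose fixed-point sets on $\Cl(G)$ and $\Irr(G)$ decompose cleanly into a ``rational part'' and the Galois orbit of $K$ (respectively, of the characters with field of values $\Q(K)$). Applying Brauer's Permutation Lemma (Theorem~\ref{Iso1}) to $\sigma$ then equates the two cardinalities, and the desired multiplicity equality drops out once the rational parts are known to match. The starting observation is that by Proposition~\ref{ClassFields} we have $\Q(G)\subseteq\Q_n$ with $n=2^3p_2^3\cdots p_t^3$, so $\Gal(\Q_n/\Q)$ splits as a direct product of the cyclic factors $\Gal(\Q_{p_i^3}/\Q)$ for $i\geq 2$ and the factor $\Gal(\Q_8/\Q)\cong\CCC_2\times\CCC_2$; crucially every value $\chi(g)$ lives in a \emph{single} factor $\Q_{p_j^3}$, not in a genuinely mixed field.

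First I would handle the case $\Q(K)\subseteq\Q_8$. Here one takes $\sigma=(\tau,\sigma_2,\dots,\sigma_t)$ where $\langle\tau\rangle=\Gal(\Q_8/\Q(K))$ is chosen to fix nothing in $\Q_8\setminus\Q(K)$, and each $\sigma_i$ generates $\Gal(\Q_{p_i^3}/\Q)$, so $\sigma_i$ fixes only $\Q$ in $\Q_{p_i^3}$. The key computation is that for $\chi\in\Irr(G)$ and $g\in G$, using that $\chi(g)$ lies in one factor $\Q_{p_j^3}$, we get $\chi(g)^\sigma=\chi(g)$ iff $\chi(g)\in\Q(K)$ (if $j\geq2$ the only fixed values are rational; if $j=1$ they are exactly those in $\Q(K)$). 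The same holds for classes by the compatibility of the two actions. Hence the $\sigma$-fixed classes are $\Cl_{\Q}(G)$ together with $\{T:\Q(T)=\Q(K)\}$ (a full Galois orbit, of size $|\Gal(\Q(K)/\Q)|$), and similarly on the character side with $\Irr_{\Q}(G)$; since $|\Cl_{\Q}(G)|=|\Irr_{\Q}(G)|$ by Theorem~\ref{NumRatChar}, Brauer's lemma gives the multiplicity equality. Running this over a representative $K$ for each field inside $\Q_8$ also yields the auxiliary identity $|\Irr_{\Q_8}(G)|=|\Cl_{\Q_8}(G)|$, which is needed next.

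Next I would treat $\Q(K)\subseteq\Q_{p_i^3}$ with $i\geq2$. Now one takes $\sigma=(1,\sigma_2,\dots,\sigma_{i-1},\tau,\sigma_{i+1},\dots,\sigma_t)$, putting the identity in the $\Q_8$-slot (so $\sigma$ fixes all of $\Q_8$) and $\langle\tau\rangle=\Gal(\Q_{p_i^3}/\Q(K))$. Then a value $\chi(g)$ is $\sigma$-fixed iff it lies in $\Q_8$ or in $\Q(K)$, so the $\sigma$-fixed classes are $\Cl_{\Q_8}(G)\cup\{T:\Q(T)=\Q(K)\}$ and the $\sigma$-fixed characters are $\Irr_{\Q_8}(G)\cup\{\chi:\Q(K)\subseteq\Q(\chi)\}$ — note the character side only gives a \emph{containment}, because a priori $\Q(\chi)$ could be a strictly larger subfield of $\Q_{p_i^3}$ also fixed by $\tau$, though since $\langle\tau\rangle$ fixes only $\Q(K)$ in $\Q_{p_i^3}$ this cannot happen; still it is cleanest to phrase it as containment first. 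Combining Brauer's lemma with $|\Irr_{\Q_8}(G)|=|\Cl_{\Q_8}(G)|$ gives $|\{\chi:\Q(K)\subseteq\Q(\chi)\}|=|\Gal(\Q(K)/\Q)|$. A final counting argument — if some $\psi$ in this set had $\Q(\psi)\supsetneq\Q(K)$, its own Galois orbit would already be too large — forces $\Q(\psi)=\Q(K)$ for all of them, so the set is exactly the Galois orbit of one such $\psi$, and its size matches $|\{T:\Q(T)=\Q(K)\}|$.

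The main obstacle is the second case: unlike the multiplicatively independent structure of $\Q_8$, inside a cyclic $\Gal(\Q_{p_i^3}/\Q)$ there are intermediate subfields, so pinning down the $\sigma$-fixed characters requires that $\tau$ be a \emph{derangement} outside $\Q(K)$ and then a separate argument to upgrade ``$\Q(K)\subseteq\Q(\chi)$'' to ``$\Q(K)=\Q(\chi)$''. Everything else — the case split on $|\Cl_{\Q}(G)|$, the reduction via Proposition~\ref{ClassFields}, and the bookkeeping of fixed-point sets — is routine once the right $\sigma$ is written down.
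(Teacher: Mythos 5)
Your proposal follows essentially the same route as the paper's proof: the same reduction via Proposition \ref{OddCase} and Proposition \ref{ClassFields}, the same choice of $\sigma$ in each of the two cases (with $\tau$ a derangement outside $\Q(K)$ and the identity in the $\Q_8$-slot when $i\geq 2$), the same auxiliary identity $|\Irr_{\Q_8}(G)|=|\Cl_{\Q_8}(G)|$, and the same final counting argument forcing $\Q(\psi)=\Q(K)$. No substantive differences.
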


 We observe that there exist examples of groups with $h(G)\neq f(G)$. However, it is not clear whether Theorem \ref{thmB} is best possible. Inspecting the groups of order at most $128$  (using the \texttt{SmallGroup} database in GAP \cite{gap}), we have  not found an example of group with  $h(G)\neq f(G)$ such that either $f(G)\leq 5$ or $h(G)\leq 5$.  Thus, we put forward the following conjecture.

\begin{con}\label{HGConjecture}
    Let $G$ be a group. If $\min\{f(G),h(G)\}\leq 5$, then $f(G)=h(G)$.
\end{con}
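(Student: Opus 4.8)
The plan is to extend the machinery behind Theorem \ref{thmB} from the regime $h(G)\le 3$ to $\min\{f(G),h(G)\}\le 5$, treating the two directions ($h(G)\le 5$ and $f(G)\le 5$) in parallel. The organizing principle is that $\Gamma=\Gal(\Q_n/\Q)$, with $n=\Exp(G)$, is \emph{abelian}, so the actions of $\Gamma$ on $\Cl(G)$ and $\Irr(G)$ split into orbits whose point-stabilizers are subgroups $H\le\Gamma$, every orbit with stabilizer $H$ having size $[\Gamma:H]$ and fixed field exactly $F=\Q_n^H$. Writing $m_H^{\Cl}$ and $m_H^{\Irr}$ for the number of orbits with stabilizer $H$ on each side, one has $h_F(G):=|\{K\in\Cl(G)\mid\Q(K)=F\}|=[F:\Q]\,m_H^{\Cl}$ and likewise $f_F(G)=[F:\Q]\,m_H^{\Irr}$. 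Since an abelian $\sigma$ fixes in each orbit either all or no points, Brauer's Permutation Lemma (Theorem \ref{Iso1}) becomes the linear system $\sum_{H\ni\sigma}(m_H^{\Cl}-m_H^{\Irr})[\Gamma:H]=0$, valid for every $\sigma\in\Gamma$. The goal is to extract from this system the equality $\max_F h_F(G)=\max_F f_F(G)$, i.e.\ $f(G)=h(G)$.

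First I would perform the M\"obius inversion of this system over the subgroup lattice of $\Gamma$. Taking for $\sigma$ a generator of $\Gamma$ (possible exactly when $\Gamma$ is cyclic) isolates $m_\Gamma^{\Cl}-m_\Gamma^{\Irr}=|\Cl_\Q(G)|-|\Irr_\Q(G)|$, so the inversion proceeds cleanly precisely for those fields $F$ whose complementary group $H=\Gal(\Q_n/F)$ is cyclic, and the entire scheme rests on first knowing $|\Cl_\Q(G)|=|\Irr_\Q(G)|$. Under $\min\{f,h\}\le 5$ one has $\min\{|\Cl_\Q(G)|,|\Irr_\Q(G)|\}\le 5$, so securing this equality is exactly Question 1 of \cite{DAL}: Theorem \ref{NumRatChar} supplies it for counts $\le 3$, and I would need the cases of $4$ and $5$ rational classes (resp.\ characters), extending \cite{NT,Rossi}. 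This is the main obstacle, and the reason the statement is still only a conjecture.

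Next I would generalize Propositions \ref{RatOrders} and \ref{ClassFields} to pin down element orders and the ambient cyclotomic field. The decisive numerical lever survives: if $g^j$ has order different from $g$ and $\Q(g^j)=\Q(g)$, then the two disjoint Galois orbits furnish $2[\Q(g):\Q]$ classes with field $\Q(g)$, so $2[\Q(g):\Q]\le h(G)\le 5$ forces $[\Q(g):\Q]=2$ in that situation. Splitting $g$ into prime-power parts and using Lemma \ref{Field Cont} (whence $[\Q(g):\Q]$ is at least the product of the degrees $[\Q(g_p):\Q]$ over the parts $g_p$), one finds that a field of values of degree $\le 5$ arises either from a single prime power $p^a$, with $a$ bounded by the same power-counting argument (for $p=2$ the relevant $2$-local fields now reach degree $4$), or from a compositum of two quadratic pieces of coprime conductor. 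Hence $\Q(G)\subseteq\Q_n$ with $n$ a product of bounded prime powers, and the component-by-component construction of Galois derangements from the proof of Theorem \ref{thmB} can be rerun, now permitting orbit sizes up to $5$ and products of two nontrivial components rather than one.

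The remaining and genuinely harder ingredient is the $2$-local base case. Because $\Gal(\Q_{2^a}/\Q)\cong\mathsf{C}_{2^{a-2}}\times\mathsf{C}_2$ is non-cyclic for $a\ge 3$, the inversion is underdetermined exactly at fields $F$ with non-cyclic $H=\Gal(\Q_n/F)$, all of which lie in the $2$-power cyclotomic tower, and Lemma \ref{Iso3} no longer applies since orbits may now have size $3,4,5$. In Theorem \ref{thmB} this was dispatched by the single aggregate identity $|\Cl_{\Q_8}(G)|=|\Irr_{\Q_8}(G)|$, but for $\min\{f,h\}\le 5$ one must control all $2$-local fields of degree up to $4$, which demands a finer $2$-local matching than one global count. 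I expect this step, together with the input $|\Cl_\Q(G)|=|\Irr_\Q(G)|$ for counts $4$ and $5$, to be where the real difficulty concentrates; granting both, the abelian inversion and the derangement bookkeeping assemble the field-by-field equalities $h_F(G)=f_F(G)$ for every field that can attain the maximum, yielding $f(G)=h(G)$.
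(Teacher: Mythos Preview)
The statement you are attempting is Conjecture \ref{HGConjecture}, which the paper explicitly \emph{does not prove}: it is put forward as an open problem supported only by a GAP search through groups of order at most $128$. There is therefore no proof in the paper to compare your proposal against, and your text is not a proof either but a strategic outline in which you yourself name the two unresolved obstructions.

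Both gaps you identify are genuine. First, the equality $|\Cl_\Q(G)|=|\Irr_\Q(G)|$ when either count equals $4$ or $5$ is precisely Question~1 of \cite{DAL} and is open; Theorem \ref{NumRatChar} only reaches count $3$, and even the direction $|\Irr_\Q(G)|=3\Rightarrow|\Cl_\Q(G)|=3$ is not fully settled (cf.\ \cite{Rossi}). Without this input the entire inversion scheme you describe cannot get started. Second, the $2$-local matching problem is real: for $h(G)\le 3$ the proof of Theorem \ref{thmB} got by with the single aggregate equality $|\Cl_{\Q_8}(G)|=|\Irr_{\Q_8}(G)|$, but once orbit sizes $4$ are allowed the relevant $2$-power fields reach $\Q_{16}$ and one must disentangle several non-cyclic stabilizers in $\Gal(\Q_{16}/\Q)\cong\mathsf{C}_4\times\mathsf{C}_2$, for which neither Lemma \ref{Iso3} nor a single Brauer count suffices.

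One further issue your outline glosses over: the parallel treatment of the case $f(G)\le 5$ is not symmetric. Propositions \ref{RatOrders} and \ref{ClassFields} are proved via element orders and Lemma \ref{Field Cont}, tools with no direct character-theoretic analogue; bounding the conductors of the fields $\Q(\chi)$ from $f(G)\le 5$ alone would require a separate argument that is nowhere in the paper and not obviously available. In short, your architecture is a faithful extrapolation of the method behind Theorem \ref{thmB}, but it correctly exposes why the statement remains a conjecture rather than a theorem.
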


\bigskip

\centerline{\bf Acknowledgement}

\bigskip

This work is part of the first author's thesis under the supervision of A. Moretó. He would like to thank him. He would also like to thank T.\,C. Burness, G. Malle, N. Rizo and G.\,A\,.L Souza for their many comments and helpful suggestions.  Parts of this work were done when the second author was visiting the  University of València. He thanks the CARGRUPS research team at the University of València for their kind hospitality. The second author wants to thank E. Pacifici and Á. del Río for their valuable feedback and insights on the manuscript.

\end{document}